\theoremstyle{plain}
\newtheorem{theorem}{Theorem}[section]
\newtheorem{remark}{Remark}[section]
\newtheorem{lemma}{Lemma}[section]
\newtheorem{prop}{Proposition}[section]
\newtheorem{corollary}{Corollary}[section]
\numberwithin{equation}{section}
\title{\bf Regularized Linear Inversion with Randomized Singular Value Decomposition}
\author{Kazufumi Ito\thanks{Department of Mathematics, North Carolina State University, Raleigh, NC 27607, USA (\texttt{kito@ncsu.edu})}\and Bangti Jin\thanks{Department of Computer Science, University College London, Gower Street, London WC1E 6BT, UK. (\texttt{b.jin@ucl.ac.uk,bangti.jin@gmail.com})}}
\date{\today}
\begin{document}

\maketitle

\setlength\abovedisplayskip{5pt}
\setlength\belowdisplayskip{5pt}

\begin{abstract}
In this work, we develop efficient solvers for linear inverse problems
based on randomized singular value decomposition (RSVD). This is achieved by combining
RSVD with classical regularization methods, e.g., truncated singular value
decomposition, Tikhonov regularization, and general Tikhonov regularization with a smoothness penalty.
One distinct feature of the proposed approach is that it explicitly preserves the structure of the regularized solution in
the sense that it always lies in the range of a certain adjoint operator. We provide
error estimates between the approximation and the exact solution under canonical source condition,
and interpret the approach in the lens of convex duality. Extensive numerical experiments are provided to illustrate
the efficiency and accuracy of the approach.\\
\noindent\textbf{Keywords}:randomized singular value decomposition, Tikhonov regularization, truncated singular
value decomposition, error estimate, source condition, low-rank approximation
\end{abstract}


\pagestyle{myheadings}
\thispagestyle{plain}

\section{Introduction}

This work is devoted to randomized singular value decomposition (RSVD) for the efficient numerical solution
of the following linear inverse problem
\begin{equation}\label{eqn:lininv}
  A x = b,
\end{equation}
where $A\in\mathbb{R}^{n\times m}$, $x\in\mathbb{R}^m$ and $b\in\mathbb{R}^n$ denote the data formation
mechanism, unknown parameter and measured data, respectively. The data $b$ is generated by $b = b^\dag + e,$
where $b^\dag = Ax^\dag$ is the exact data, and $x^\dag$ and $e$ are the
exact solution and noise, respectively. We denote by $\delta=\|e\|$ the noise level.

Due to the ill-posed nature, regularization techniques are often applied to obtain a stable
numerical approximation. A large number of regularization methods have been developed. The
classical ones include Tikhonov regularization and its variant, truncated singular
value decomposition, and iterative regularization techniques, and
they are suitable for recovering smooth solutions. More recently, general
variational type regularization methods have been proposed to preserve distinct features, e.g.,
discontinuity, edge and sparsity. This work focuses on recovering a smooth solution by Tikhonov regularization and
truncated singular value decomposition, which are still routinely applied
in practice. However, with the advent of the ever increasing data volume, their routine
application remains challenging, especially in the context of massive data
and multi-query, e.g., Bayesian inversion or tuning multiple hyperparameters. Hence, it is still of
great interest to develop fast inversion algorithms.

In this work, we develop efficient linear inversion techniques based on RSVD. Over the last decade,
a number of RSVD inversion algorithms have been developed and analyzed \cite{Gu:2015,HalkoMartinssonTropp:2011,MuscoMusco:2015,
WittenCandes:2015,SzlamTulloch:2017}. RSVD exploits the intrinsic low-rank
structure of $A$ for inverse problems to construct an accurate approximation
efficiently. Our main contribution lies in providing a unified framework for developing fast regularized inversion
techniques based on RSVD, for the following three popular regularization methods: truncated SVD,
standard Tikhonov regularization, and Tikhonov regularization with a smooth penalty. The
main novelty is that it explicitly preserves a certain range condition
of the regularized solution, which is analogous to source condition in regularization
theory \cite{EnglHankeNeubauer:1996,ItoJin:2015}, and admits interpretation in the lens of convex duality.
Further, we derive error bounds on the approximation
with respect to the true solution $x^\dag$ in Section \ref{sec:error}, in the spirit of regularization theory for noisy operators.
These results provide guidelines on the low-rank approximation, and differ
from existing results \cite{BoutsidisMagdon:2014,JiaWang:2018,XiangZou:2013,XiangZou:2015,WeiXieZhang:2016},
where the focus is on relative error estimates with respect to the regularized solution.

Now we situate the work in the literature on RSVD for inverse problems. RSVD has been applied to solving
inverse problems efficiently \cite{BoutsidisMagdon:2014,XiangZou:2013,XiangZou:2015,WeiXieZhang:2016}.
Xiang and Zou \cite{XiangZou:2013} developed RSVD for standard Tikhonov regularization and provided relative
error estimates between the approximate and exact Tikhonov minimizer, by adapting the perturbation theory
for least-squares problems. In the work \cite{XiangZou:2015}, the authors proposed two approaches
based respectively on transformation to standard form and randomized generalized SVD (RGSVD), and for the
latter, RSVD is only performed on the matrix $A$. There was no error estimate in \cite{XiangZou:2015}.
Wei et al \cite{WeiXieZhang:2016} proposed different
implementations, and derived some relative error estimates. Boutsidis and Magdon \cite{BoutsidisMagdon:2014}
analyzed the relative error for truncated RSVD, and discussed the sample complexity.
Jia and Yang \cite{JiaWang:2018} presented a different way to perform truncated RSVD via
LSQR for general smooth penalty, and provided relative error estimates. See also \cite{KluthJin:2019} for
an evaluation within magnetic particle imaging. More generally,
the idea of randomization has been fruitfully employed to reduce the computational cost associated with
regularized inversion in statistics and machine learning, under the name of
sketching in either primal or dual spaces \cite{ChenLiuLyuKingZhang:2016,PilanciWainwright:2016,ZhangMahdaviJinYang:2014,
WangLeeMahdaviKolar:2017}. All these works also
essentially exploit the low-rank structure, but in a different manner. Our analysis may also be extended to these approaches.

The rest of the paper is organized as follows. In Section \ref{sec:prelim}, we recall preliminaries on
RSVD, especially implementation and error bound. Then in Section \ref{sec:reg}, under one
single guiding principle, we develop efficient inversion schemes based on RSVD for three classical regularization
methods, and give the error analysis in Section \ref{sec:error}. Finally we illustrate the approaches
with some numerical results in Section \ref{sec:numer}. In the appendix, we describe an iterative refinement
scheme for (general) Tikhonov regularization. Throughout, we denote by lower and capital letters for vectors and
matrices, respectively, by $I$ an identity matrix of an appropriate size, by $\|\cdot\|$ the Euclidean
norm for vectors and spectral norm for matrices,
and by $(\cdot,\cdot)$ for Euclidean inner product for vectors. The superscript $^*$ denotes
the vector/matrix transpose. We use the notation $\mathcal{R}(A)$ and $\mathcal{N}(A)$ to denote the range
and kernel of a matrix $A$, and $A_k$ and $\tilde A_k$ denote the optimal and approximate rank-$k$
approximations by SVD and RSVD, respectively. The notation $c$ denotes a generic constant which may
change at each occurrence, but is always independent of the condition number of $A$.

\section{Preliminaries}\label{sec:prelim}
Now we recall preliminaries on RSVD and technical lemmas.

\subsection{SVD and pseudoinverse}

Singular value decomposition (SVD) is one of most powerful tools in numerical linear algebra. For
any matrix $A\in\mathbb{R}^{n\times m}$, SVD of $A$ is given by
\begin{equation*}
  A=U\Sigma V^*,
\end{equation*}
where $U=[u_1\ u_2 \ \cdots \ u_n] \in \mathbb{R}^{n\times n}$ and $V=[v_1\ v_2 \ \cdots \ v_m]\in
\mathbb{R}^{m\times m}$ are column orthonormal matrices, with the vectors $u_i$ and $v_i$ being the
left and right singular vectors, respectively, and $V^*$ denotes the transpose of $V$. The diagonal
matrix $\Sigma=\mathrm{diag}(\sigma_i)\in\mathbb{R}^{n\times m}$
has nonnegative diagonal entries $\sigma_i$, known as singular values (SVs), ordered nonincreasingly:
\begin{equation*}
  \sigma_1\geq \sigma_2\geq \ldots\geq \sigma_r>\sigma_{r+1}=\ldots=\sigma_{\min(m,n)}=0,
\end{equation*}
where $r=\mathrm{rank}(A)$ is the rank of $A$. Let $\sigma_i(A)$ be the
$i$th SV of $A$.  The complexity of the standard
Golub-Reinsch algorithm for computing SVD is $4n^2m + 8m^2n + 9m^3$ (for $n\geq m$)
\cite[p. 254]{GolubvanLoan:1996}. Thus, it is expensive for large-scale problems.

Now we can give the optimal low-rank approximation to $A$. By Eckhardt-Young theorem, the optimal
rank-$k$ approximation $A_k$ of $A$ (in spectral norm) is given by
\begin{equation*}
  \|A-U_k\Sigma_kV^*_k\| = \sigma_{k+1},
\end{equation*}
where $U_k\in\mathbb{R}^{n\times k}$ and $V_k\in\mathbb{R}^{m\times k}$ are the submatrix formed by taking
the first $k$ columns of the matrices $U$ and $V$, and $\Sigma_k=\mathrm{diag}(\sigma_1,\ldots,\sigma_k)\in\mathbb{R}^{k\times k}$.
The pseudoinverse $A^\dag\in\mathbb{R}^{m\times n}$ of $A \in\mathbb{R}^{n\times m}$ is given by
\begin{equation*}
  A^\dag = V_r\Sigma_r^{-1}U_r^*.
\end{equation*}

We have the following properties of the pseudoinverse of matrix product.
\begin{lemma}\label{lem:pseudoinverse}
For any $A\in\mathbb{R}^{m\times n}$, $B\in\mathbb{R}^{n\times l}$, the
identity $(AB)^\dag = B^\dag A^\dag$ holds, if one of the following conditions is fulfilled:
{\rm(i)} $A$ has orthonormal columns;
{\rm(ii)} $B$ has orthonormal rows;
{\rm(iii)} $A$ has full column rank and $B$ has full row rank.
\end{lemma}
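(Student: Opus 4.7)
The plan is to verify the statement through the uniqueness of the Moore--Penrose pseudoinverse. Recall that for any matrix $M$, the pseudoinverse $M^\dag$ is the unique matrix $X$ satisfying the four Penrose conditions: $MXM = M$, $XMX = X$, $(MX)^* = MX$, and $(XM)^* = XM$. Thus, to prove $(AB)^\dag = B^\dag A^\dag$ in each case it suffices to set $X = B^\dag A^\dag$ and verify these four identities with $M = AB$.

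Cases (i) and (ii) are essentially symmetric and are the easy part. In case (i), the assumption $A^*A = I$ gives $A^\dag = A^*$, so $X = B^\dag A^*$, and the crucial simplifications $A^*A = I$, combined with the Penrose identities for $B$ itself (for example $BB^\dag B = B$ and the self-adjointness of $B^\dag B$), collapse each of the four quantities $ABXAB$, $XABX$, $(ABX)^*$, $(XAB)^*$ to the desired form with only a line or two of algebra. Case (ii) is entirely analogous via $BB^* = I$ and $B^\dag = B^*$. I would present (i) in detail and note that (ii) follows by interchanging the roles of $A$ and $B$ and passing to the adjoint.

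For case (iii), full column rank of $A$ and full row rank of $B$ give the explicit formulas $A^\dag = (A^*A)^{-1}A^*$ and $B^\dag = B^*(BB^*)^{-1}$, with $A^*A$ and $BB^*$ genuinely invertible. The candidate then becomes
\begin{equation*}
X = B^*(BB^*)^{-1}(A^*A)^{-1}A^*,
\end{equation*}
and the four Penrose conditions reduce to bookkeeping with the telescoping identities $(A^*A)^{-1}A^*A = I$ and $BB^*(BB^*)^{-1} = I$. I would verify $(AB)X(AB) = AB$ and $X(AB)X = X$ by grouping factors so each of these identities cancels once, and $(ABX)^* = ABX$, $(XAB)^* = XAB$ by observing that $ABX = A(A^*A)^{-1}A^*$ and $XAB = B^*(BB^*)^{-1}B$ are both manifestly self-adjoint orthogonal projectors.

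The main obstacle is not conceptual but clerical: keeping the product expressions of case (iii) aligned so that the cancellations are visible, and being explicit that the full-rank hypotheses are precisely what guarantee the invertibility of $A^*A$ and $BB^*$ used in those cancellations. Once the four Penrose identities are verified in each case, uniqueness of the pseudoinverse finishes the argument.
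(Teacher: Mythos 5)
Your proof is correct. For the record, the paper states this lemma without any proof, treating it as a standard property of the Moore--Penrose pseudoinverse (these are classical special cases of the reverse-order law), so there is no argument of the authors' to compare against; your write-up supplies the missing justification. The route you choose --- exhibiting $X=B^\dag A^\dag$ and checking the four Penrose conditions for $M=AB$, then invoking uniqueness --- is the standard and cleanest one, and every step you sketch goes through: in case (i), $A^*A=I$ and $A^\dag=A^*$ give $MXM=ABB^\dag B=AB$, $XMX=B^\dag BB^\dag A^*=X$, $XM=B^\dag B$ is self-adjoint, and $MX=ABB^\dag A^*$ is self-adjoint because $BB^\dag$ is; case (ii) does follow from case (i) by passing to adjoints, using $(M^*)^\dag=(M^\dag)^*$ and the fact that orthonormal rows of $B$ means orthonormal columns of $B^*$; and in case (iii) your identifications $ABX=A(A^*A)^{-1}A^*$ and $XAB=B^*(BB^*)^{-1}B$ as orthogonal projectors immediately yield the two symmetry conditions, after which $MXM=M$ and $XMX=X$ are one-line cancellations. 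The only caution, which you already flag, is to state explicitly that full column rank of $A$ and full row rank of $B$ are what make $A^*A$ and $BB^*$ invertible; this is where the hypothesis enters and why the reverse-order law fails for general products.
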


The next result gives an estimate on matrix pseudoinverse.
\begin{lemma}\label{lem:pseudoinverse2}
For symmetric semipositive definite $A,B\in\mathbb{R}^{m\times m}$, there holds
\begin{equation*}
 \| A^\dag - B^\dag\| \leq \|A^\dag\|\|B^\dag\|\|B-A\|.
\end{equation*}
\end{lemma}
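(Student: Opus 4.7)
The plan is to reduce the pseudoinverse estimate to the classical invertible-matrix identity
\begin{equation*}
A^{-1} - B^{-1} = A^{-1}(B-A)B^{-1},
\end{equation*}
from which the desired bound would follow immediately by submultiplicativity of the spectral norm. For symmetric PSD matrices the spectral structure of the pseudoinverse should allow exactly the same manipulation to go through, so the real work is verifying the analogous identity $A^\dag - B^\dag = A^\dag(B-A)B^\dag$.

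The facts I would pull from the spectral decomposition of a symmetric PSD $M = Q\Lambda Q^*$ are: $M^\dag = Q\Lambda^\dag Q^*$ with $\Lambda^\dag_{ii}=\lambda_i^{-1}$ when $\lambda_i>0$ and $0$ otherwise; consequently $\mathcal{R}(M^\dag)=\mathcal{R}(M)$, $\mathcal{N}(M^\dag)=\mathcal{N}(M)$, and $MM^\dag = M^\dag M = P_{\mathcal{R}(M)}$ is the orthogonal projector onto $\mathcal{R}(M)$; in particular $M^\dag P_{\mathcal{R}(M)} = P_{\mathcal{R}(M)}M^\dag = M^\dag$. Under the (implicit) hypothesis $\mathcal{R}(A)=\mathcal{R}(B)=:R$ with common projector $P$, I would then compute
\begin{equation*}
A^\dag(B-A)B^\dag \;=\; A^\dag B B^\dag - A^\dag A B^\dag \;=\; A^\dag P - P B^\dag \;=\; A^\dag - B^\dag,
\end{equation*}
and conclude $\|A^\dag - B^\dag\| \le \|A^\dag\|\,\|B-A\|\,\|B^\dag\|$ via $\|XYZ\|\le\|X\|\|Y\|\|Z\|$.

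The main obstacle is that without equal ranges the identity, and indeed the inequality itself, can break down: taking $A=\mathrm{diag}(1,0)$ and $B=\mathrm{diag}(1,\varepsilon)$ already yields $\|A^\dag-B^\dag\|=1/\varepsilon$ against $\|A^\dag\|\|B^\dag\|\|B-A\|=1$. So the proof genuinely requires either the range-matching condition $\mathcal{R}(A)=\mathcal{R}(B)$ or that both matrices be strictly positive definite (so that the pseudoinverses reduce to inverses and the classical resolvent identity applies verbatim). In the subsequent applications the lemma is invoked on matrices of the form $A^*A+\alpha I$ with $\alpha>0$, which are strictly PD, so the hypothesis is automatic and no extra care is needed; otherwise one would have to fall back on a Wedin-type bound involving $\max(\|A^\dag\|^2,\|B^\dag\|^2)\|A-B\|$.
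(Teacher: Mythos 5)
Your argument arrives at the same key identity as the paper, $A^\dag - B^\dag = A^\dag(B-A)B^\dag$, but by a different and in fact more careful route. The paper writes $A^\dag = \lim_{\mu\to 0^+}(A+\mu I)^{-1}$ and passes the resolvent identity $C^{-1}-D^{-1}=C^{-1}(D-C)D^{-1}$ to the limit; you instead verify the identity directly from the spectral projectors under the explicit hypothesis $\mathcal{R}(A)=\mathcal{R}(B)$. The difference is substantive: for singular PSD $A$ the limit $\lim_{\mu\to 0^+}(A+\mu I)^{-1}$ does not exist (it blows up like $\mu^{-1}$ on $\mathcal{N}(A)$), so the paper's first equality is not literally valid, and your counterexample $A=\mathrm{diag}(1,0)$, $B=\mathrm{diag}(1,\varepsilon)$ is correct: it gives $\|A^\dag-B^\dag\|=1/\varepsilon$ against $\|A^\dag\|\|B^\dag\|\|B-A\|=1$, so the lemma as stated is false without a range (or at least rank) compatibility condition. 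The paper's remark acknowledges the need for a rank condition only for the general-matrix Wedin-type bound, not for the lemma itself. Your version --- with the hypothesis $\mathcal{R}(A)=\mathcal{R}(B)$ made explicit and the projector computation $A^\dag(B-A)B^\dag = A^\dag P - P B^\dag = A^\dag - B^\dag$ --- is the correct statement and a clean proof of it.

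One inaccuracy in your closing remark: the lemma is not invoked only on strictly positive definite matrices of the form $AA^*+\alpha I$. In Theorems \ref{thm:err-trsvd} and \ref{thm:err-TSVD} it is applied to $(A_kA_k^*)^\dag-(\tilde A_k\tilde A_k^*)^\dag$, where both matrices have rank $k<n$ and their ranges $\mathcal{R}(U_k)$ and $\mathcal{R}(\tilde U_k)$ generally differ, since $\tilde U_k$ is only a randomized approximation of $U_k$. So the range caveat you raise is not moot downstream; it is precisely in those applications that the identity (and hence the estimate) requires additional justification or correction terms, e.g., a Wedin-type bound of the form $\frac{1+\sqrt 5}{2}\|A^\dag\|\|B^\dag\|\|B-A\|$ under the equal-rank condition that the paper itself records in the remark following the lemma.
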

\begin{proof}
Since $A$ is symmetric semipositive definite, we have
$A^\dag = \lim_{\mu\to 0^+}(A+\mu I)^{-1}.$ By the identity $C^{-1}-D^{-1}=C^{-1}
(D-C)D^{-1}$ for invertible $C,D\in\mathbb{R}^{m\times m}$,
\begin{equation*}
 \begin{aligned}
   A^\dag - B^\dag & = \lim_{\mu\to0^+}[(A+\mu I)^{-1}-(B+\mu I)^{-1}] \\
   &= \lim_{\mu\to0^+}[(A+\mu I)^{-1}(B-A)(B+\mu I)^{-1}]= A^\dag(B-A)B^\dag.
 \end{aligned}
\end{equation*}
Now the estimate follows from the matrix spectral norm estimate.\qed
\end{proof}

\begin{remark}
The estimate for general matrices is weaker than the one in Lemma \ref{lem:pseudoinverse2}:
for general $A,B\in\mathbb{R}^{n\times m}$ with $\mathrm{rank}(A)=\mathrm{rank}(B)<\min(m,n)$, there holds \cite{Stewart:1977}
\begin{equation*}
  \|A^\dag - B^\dag\|\leq \tfrac{1+\sqrt{5}}{2}\|A^\dag\|\|B^\dag\|\|B-A\|.
\end{equation*}
The rank condition is essential, and otherwise, the estimate may not hold.
\end{remark}

Last, we recall the stability of SVs (\cite[Cor. 7.3.8]{HornJohnson:1985}, \cite[Sec. 1.3]{Tao:2012}).
\begin{lemma}\label{lem:Weyl}
For $A,B\in\mathbb{R}^{n\times m}$, there holds
\begin{equation*}
  |\sigma_i(A+B)-\sigma_i(A)|\leq \|B\|,\quad i=1,\ldots,\min(m,n).
\end{equation*}
\end{lemma}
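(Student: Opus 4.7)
The plan is to prove this classical Weyl-type perturbation bound via the Courant--Fischer min-max characterization of singular values, which transfers the inequality $\|(A+B)x\| \le \|Ax\| + \|B\|$ uniformly to the $i$-th singular value.

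First I would recall that for any $M \in \mathbb{R}^{n\times m}$ and any $1 \le i \le \min(m,n)$,
\begin{equation*}
  \sigma_i(M) = \max_{\substack{S \subset \mathbb{R}^m \\ \dim S = i}} \min_{\substack{x \in S \\ \|x\|=1}} \|Mx\|,
\end{equation*}
obtained by applying the usual min-max principle to the symmetric positive semidefinite matrix $M^*M$ and taking square roots (so that the eigenvalues $\sigma_i(M)^2$ of $M^*M$ inherit the variational formula). Alternatively, one may invoke the equivalent dilation $\widetilde M = \bigl(\begin{smallmatrix} 0 & M \\ M^* & 0 \end{smallmatrix}\bigr)$, whose nonzero eigenvalues are $\pm\sigma_i(M)$, and appeal directly to Weyl's inequality for Hermitian matrices.

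Next, for any unit vector $x \in \mathbb{R}^m$, the triangle inequality and the definition of the spectral norm give
\begin{equation*}
  \bigl|\,\|(A+B)x\| - \|Ax\|\,\bigr| \le \|Bx\| \le \|B\|.
\end{equation*}
Hence, for every $i$-dimensional subspace $S$,
\begin{equation*}
  \min_{x \in S, \|x\|=1} \|(A+B)x\| \le \min_{x \in S, \|x\|=1} \|Ax\| + \|B\|,
\end{equation*}
since $\|B\|$ is independent of $x$. Taking the maximum over all such $S$ on both sides yields $\sigma_i(A+B) \le \sigma_i(A) + \|B\|$.

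Finally, to obtain the reverse inequality, I would apply the same argument with the roles of $A$ and $A+B$ interchanged: writing $A = (A+B) + (-B)$ and using $\|-B\| = \|B\|$ gives $\sigma_i(A) \le \sigma_i(A+B) + \|B\|$. Combining the two bounds yields the claim. There is no real obstacle here beyond invoking the min-max characterization correctly; the one point to keep in mind is that when $n \ne m$ the min-max formula is phrased on $\mathbb{R}^m$ (or equivalently on $\mathbb{R}^n$ via the right/left singular vectors), but the $\|B\|$ bound is uniform in $x$ so this choice is immaterial.
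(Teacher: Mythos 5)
Your argument is correct. Note that the paper itself gives no proof of this lemma: it is recalled as a classical fact with citations to Horn--Johnson (Cor.~7.3.8) and Tao (Sec.~1.3), so there is nothing to compare against line by line. Your route via the Courant--Fischer characterization
\begin{equation*}
  \sigma_i(M) = \max_{\substack{S \subset \mathbb{R}^m \\ \dim S = i}}\ \min_{\substack{x \in S \\ \|x\|=1}} \|Mx\|
\end{equation*}
is the standard textbook proof, and every step checks out: the pointwise bound $\|(A+B)x\| \le \|Ax\| + \|B\|$ passes through the inner minimum because $\|B\|$ is constant in $x$, the outer maximum preserves the inequality, and the symmetric decomposition $A = (A+B) + (-B)$ supplies the reverse bound. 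The alternative you mention via the Hermitian dilation $\bigl(\begin{smallmatrix} 0 & M \\ M^* & 0 \end{smallmatrix}\bigr)$ and Weyl's eigenvalue inequality is equally valid and is essentially the proof in the cited references. No gaps.
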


\subsection{Randomized SVD}

Traditional numerical methods to compute a rank-$k$ SVD, e.g., Lanczos bidiagonalization and Krylov
subspace method, are especially powerful for large sparse or structured matrices. However, for
many discrete inverse problems, there is no such structure. The prototypical model in inverse
problems is a Fredholm integral equation of the first kind, which gives rise to unstructured dense matrices. Over the past decade, randomized algorithms
for computing low-rank approximations have gained popularity. Frieze
\textit{et al} \cite{FriezeKannanVempala:2004} developed a Monte Carlo SVD to efficiently compute an approximate low-rank SVD
based on non-uniform row and column sampling. Sarlos \cite{Sarlos:2006} proposed an approach based on random projection,
using properties of random vectors to build a subspace capturing the matrix range. Below we describe
briefly the basic idea of RSVD, and refer readers to \cite{HalkoMartinssonTropp:2011} for an overview
and to \cite{Gu:2015,MuscoMusco:2015,SzlamTulloch:2017} for an incomplete list of recent works.

RSVD can be viewed as an iterative procedure based on SVDs of a sequence of low-rank matrices to deliver a nearly optimal low-rank SVD.
Given a matrix $A\in\mathbb{R}^{n\times m}$ with $n\geq m$, we aim at obtaining a rank-$k$ approximation, with
$k\ll\min( m,n)$. Let $\Omega \in\mathbb{R}^{m\times(k+p)}$, with $k+p\leq m$, be a random matrix, with its
entries following an i.i.d. Gaussian distribution $N(0,1)$, and the integer
$p\geq 0$ is an oversampling parameter (with a default value $p=5$ \cite{HalkoMartinssonTropp:2011}).
Then we form a random matrix $Y$ by
\begin{equation}\label{eqn:Y}
  Y = (AA^*)^qA\Omega,
\end{equation}
where the exponent $q\in\mathbb{N}\cup\{0\}$. By SVD of $A$, i.e., $A=U\Sigma V^*$, $Y$ is given by
\begin{equation*}
   Y = U\Sigma^{2q+1}V^*\Omega.
\end{equation*}
Thus $\Omega$ is used for probing $\mathcal{R}(A)$, and $\mathcal{R}(Y)$ captures $\mathcal{R}(U_k)$
well. The accuracy is determined by the decay of $\sigma_i$s, and the exponent $q$ can greatly
improve the performance when $\sigma_i$s decay slowly. Let $Q\in\mathbb{R}^{n\times (k+p)}$ be an orthonormal basis
for $\mathcal{R}(Y)$, which can be computed efficiently via QR
factorization or skinny SVD. Next we form the (projected) matrix
\begin{equation*}
  B = Q^*A \in \mathbb{R}^{(k+p)\times m}.
\end{equation*}
Last, we compute SVD of $B$
\begin{equation*}
  B= WSV^*,
\end{equation*}
with $W\in \mathbb{R}^{(k+p)\times (k+p)}$, $S\in\mathbb{R}^{(k+p)\times (k+p)}$ and $V\in \mathbb{R}^{m\times (k+p)}$.
This again can be carried out efficiently by standard SVD, since the size of $B$ is much smaller. With $1:k$ denoting the
index set $\{1,\ldots,k\}$, let
$\tilde U_k = QW(1:n,1:k)\in\mathbb{R}^{n\times k}$, $\tilde \Sigma_k = S(1:k,1:k)\in \mathbb{R}^{k\times k}$
and $\tilde V_k =V(1:m,1:k)\in \mathbb{R}^{m\times k}$.
The triple $(\tilde U_k,\tilde\Sigma_k,\tilde V_k)$ defines a rank-$k$ approximation $\tilde A_k$:
\begin{equation*}
  \tilde A_k = \tilde U_k\tilde \Sigma_k \tilde V^*_k.
\end{equation*}
The triple $(\tilde U_k,\tilde \Sigma_k,\tilde V_k)$ is a nearly optimal rank-$k$ approximation to
$A$; see Theorem \ref{thm:rsvd} below for a precise statement. The approximation is random due to range probing
by $\Omega$. By its very construction, we have
\begin{equation}\label{eqn:rsvd}
  \tilde A_k = \tilde P_k A,
\end{equation}
where $\tilde P_k=\tilde U_k\tilde U_k^*\in\mathbb{R}^{n\times n}$ is the orthogonal projection
into $\mathcal{R}(\tilde U_k)$. The procedure for RSVD is given in Algorithm \ref{alg:rsvd}.
The  complexity of Algorithm \ref{alg:rsvd} is about $4(q+1)nmk$, which can be much smaller than
that of full SVD if $k\ll \min(m,n)$.

\begin{algorithm}[hbt!]
  \centering
  \caption{RSVD for $A\in\mathbb{R}^{n\times m}$, $n\geq m$.\label{alg:rsvd}}
  \begin{algorithmic}[1]
    \STATE Input matrix $A\in\mathbb{R}^{n\times m}$, $n\geq m$, and target rank $k$;
    \STATE Set parameters $p$ (default $p=5$), and $q$ (default $q=0$);
    \STATE Sample a random matrix $\Omega=(\omega_{ij})\in\mathbb{R}^{m\times (k+p)}$, with $\omega_{ij}\sim N(0,1)$;
    \STATE Compute the randomized matrix $Y=(AA^*)^qA\Omega$;
    \STATE Find an orthonormal basis $Q$ of $\mathrm{range}(Y)$ by QR decomposition;
    \STATE Form the matrix $B=Q^*A$;
    \STATE Compute the SVD of $B=WSV^*$;
    \STATE Return the rank $k$ approximation $(\tilde U_k,\tilde \Sigma_k,\tilde V_k)$, cf. \eqref{eqn:rsvd}.
  \end{algorithmic}
\end{algorithm}

\begin{remark}
The SV $\sigma_i$ can be characterized by \cite[Theorem 8.6.1, p. 441]{GolubvanLoan:1996}:
\begin{equation*}
  \sigma_i = \max_{\substack{u\in\mathbb{R}^n, u\perp\mathrm{span}(\{u_j\}_{j=1}^{i-1})}}\frac{\|A^*u\|}{\|u\|}.
\end{equation*}
Thus, one may estimate $\sigma_i(A)$ directly by $\tilde\sigma_i(A) = \|A^* \tilde U(:,i)\|$, and
refine the SV estimate, similar to Rayleigh quotient
acceleration for computing eigenvalues.
\end{remark}

The following error estimates hold for RSVD $(\tilde U_k,\tilde \Sigma_k,\tilde V_k)$ given
by Algorithm \ref{alg:rsvd} with $q=0$ \cite[Cor. 10.9, p. 275]{HalkoMartinssonTropp:2011},
where the second estimate shows how the parameter $p$ improves the accuracy. The exponent $q$
is in the spirit of a power method, and can significantly improve the accuracy in the absence
of spectral gap; see \cite[Cor. 10.10, p. 277]{HalkoMartinssonTropp:2011} for related discussions.

\begin{theorem}\label{thm:rsvd}
For $A\in\mathbb{R}^{n\times m}$, $n\geq m$, let $\Omega\in\mathbb{R}^{m\times (k+p)}$ be a standard
Gaussian matrix, $k+p\leq m$ and $p\geq 4$, and $Q$ an orthonormal basis for $\mathcal{R}(A\Omega)$. Then with probability at least $1-3p^{-p}$, there holds
\begin{equation*}
  \|A-QQ^*A\|\leq (1+6((k+p)p\log p)^\frac{1}{2})\sigma_{k+1}+3\sqrt{k+p}\Big(\sum_{j>k}\sigma_j^2\Big)^\frac{1}{2},
\end{equation*}
and further with probability at least $1-3e^{-p}$, there holds
\begin{equation*}
  \|A-QQ^*A\|\leq \Big(1+16\Big(1+\frac{k}{p+1}\Big)^\frac{1}{2}\Big)\sigma_{k+1} + \frac{8(k+p)^\frac{1}{2}}{p+1}\Big(\sum_{j>k}\sigma_j^2\Big)^\frac{1}{2}.
\end{equation*}
\end{theorem}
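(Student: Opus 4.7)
The plan is to prove Theorem \ref{thm:rsvd} in two stages: first a purely deterministic structural bound on $\|A-QQ^*A\|$ in terms of how the random test matrix $\Omega$ interacts with the dominant and trailing right singular subspaces of $A$, and then probabilistic control of the resulting Gaussian quantities. Throughout, $QQ^*$ is the orthogonal projector onto $\mathcal{R}(A\Omega)$.

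First I would set up coordinates via the SVD $A = U\Sigma V^*$. Partition $\Sigma = \mathrm{diag}(\Sigma_1,\Sigma_2)$ with $\Sigma_1\in\mathbb{R}^{k\times k}$, and split the ``rotated'' Gaussian matrix $V^*\Omega$ into blocks $\Omega_1\in\mathbb{R}^{k\times(k+p)}$ and $\Omega_2\in\mathbb{R}^{(m-k)\times(k+p)}$. Orthogonal invariance of the standard Gaussian distribution gives that $\Omega_1$ and $\Omega_2$ are themselves independent standard Gaussian matrices. A direct calculation, based on the fact that $\mathcal{R}(A\Omega)$ contains the image under $A$ of any matrix obtained by right-multiplying $\Omega$, yields the deterministic bound
\begin{equation*}
  \|A - QQ^*A\|^2 \le \|\Sigma_2\|^2 + \|\Sigma_2\Omega_2\Omega_1^\dagger\|^2,
\end{equation*}
valid whenever $\Omega_1$ has full row rank (an almost-sure event). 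Since $\|\Sigma_2\| = \sigma_{k+1}$, the task reduces to controlling $\|\Sigma_2\Omega_2\Omega_1^\dagger\|$.

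Next I would apply the submultiplicative split $\|\Sigma_2\Omega_2\Omega_1^\dagger\| \le \|\Omega_1^\dagger\|\cdot\|\Sigma_2\Omega_2\|$, and bound each factor separately using standard Gaussian matrix estimates. For the first (moment) bound I would use polynomial-tail estimates on the extreme singular values of Gaussian matrices: a lower bound on the smallest singular value of $\Omega_1$ (requiring the oversampling $p\ge 4$ so that $\mathbb{E}\|\Omega_1^\dagger\|^2$ is finite) together with an upper bound on $\|\Omega_2\|$ of the form $\sqrt{m-k}+\sqrt{k+p}+t$ with polynomially-decaying tail. For the sharper exponential bound, I would instead use Gaussian concentration of measure for Lipschitz functions of $\Omega_2$ and the corresponding tail bound for $\|\Omega_1^\dagger\|$; this is where the improved constants $16$ and $8/(p+1)$ come from, and where the factor $(1+k/(p+1))^{1/2}$ emerges from the mean estimate of $\|\Omega_1^\dagger\|$. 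The summand $(\sum_{j>k}\sigma_j^2)^{1/2}$ appears naturally from using the Frobenius-type identity to estimate $\|\Sigma_2\Omega_2\|$ (averaging the Gaussian mass over the trailing singular directions), while the leading $\sigma_{k+1}$ factor comes from the deterministic term $\|\Sigma_2\|$ together with the spectral bound on $\|\Sigma_2\Omega_2\|$.

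The main obstacle is the probabilistic estimate on $\|\Omega_1^\dagger\|$: the smallest singular value of a $k\times(k+p)$ Gaussian matrix has density that does not vanish at the origin, so one cannot get a useful deterministic bound, and the moments blow up unless $p\ge 4$. Threading this through with the explicit constants ($6$, $16$, $8$, $3$) that appear in the statement requires tracking estimates carefully and choosing the deviation parameter $t$ in the Gaussian concentration inequality to balance failure probability against the bound, and is the step that uses the hypothesis $p\ge 4$ in an essential way. Once these bounds are in hand, substitution into the deterministic inequality and simplification yield both stated estimates.
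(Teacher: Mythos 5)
The paper does not prove this theorem: it is quoted verbatim from Halko, Martinsson and Tropp \cite[Cor.~10.9]{HalkoMartinssonTropp:2011}, so there is no in-paper argument to compare against. Your outline is essentially a faithful reconstruction of the argument in that reference: the deterministic bound $\|A-QQ^*A\|^2\le\|\Sigma_2\|^2+\|\Sigma_2\Omega_2\Omega_1^\dagger\|^2$ (their Theorem~9.1), the observation that $\Omega_1$ and $\Omega_2$ are independent standard Gaussian by rotational invariance, and then separate polynomial-tail and concentration-of-measure estimates to obtain the two stated probability levels. Two points where your sketch drifts from what is actually needed. First, the submultiplicative split $\|\Sigma_2\Omega_2\Omega_1^\dagger\|\le\|\Omega_1^\dagger\|\,\|\Sigma_2\Omega_2\|$ is lossier than the route that produces the stated constants: the reference conditions on $\Omega_1$ and applies the mixed-norm Gaussian bound $\E\|SGT\|\le\|S\|\,\|T\|_F+\|S\|_F\|T\|$ directly to the product, which is precisely how the $\sigma_{k+1}$ term (paired with $\|\Omega_1^\dagger\|_F$) and the $(\sum_{j>k}\sigma_j^2)^{1/2}$ term (paired with $\|\Omega_1^\dagger\|$) emerge with the prefactors $6((k+p)p\log p)^{1/2}$, $3\sqrt{k+p}$, etc.; with your cruder split it is doubtful you would recover these exact constants. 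Second, the claim that the moments of $\|\Omega_1^\dagger\|$ ``blow up unless $p\ge4$'' is not accurate: $\E\|\Omega_1^\dagger\|_F^2=k/(p-1)$ is already finite for $p\ge2$, and the hypothesis $p\ge4$ enters only in tuning the deviation parameters so that the failure probability takes the clean form $3p^{-p}$. Neither issue undermines the overall strategy, which is the correct one, but as written the proposal would not reproduce the theorem with the stated constants without reverting to the conditional mixed-norm estimate.
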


The next result is an immediate corollary of Theorem \ref{thm:rsvd}. Exponentially
decaying SVs arise in, e.g., backward heat conduction and elliptic Cauchy problem.
\begin{corollary}
Suppose that the SVs $\sigma_i$ decay exponentially, i.e., $\sigma_j=c_0c_1^{j}$, for
some $c_0>0$ and $c_1\in(0,1)$. Then with probability at least $1-3p^{-p}$, there holds
\begin{equation*}
  \|A-QQ^*A\|\leq \Big[1+6((k+p)p\log p)^\frac{1}{2}+\frac{3(k+p)^\frac12}{(1-c_1^2)^\frac12}\Big]\sigma_{k+1},
\end{equation*}
and further with probability at least $1-3e^{-p}$, there holds
\begin{equation*}
  \|A-QQ^*A\|\leq \Big[\Big(1+16\Big(1+\frac{k}{p+1}\Big)^\frac{1}{2}\Big) + \frac{8(k+p)^\frac{1}{2}}{(p+1)(1-c_1^2)^{\frac12}}\Big]\sigma_{k+1}.
\end{equation*}
\end{corollary}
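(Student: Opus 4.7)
The plan is to invoke Theorem \ref{thm:rsvd} directly; the corollary is essentially a calculation in which the tail sum $\sum_{j>k}\sigma_j^2$ appearing in both bounds is evaluated under the exponential decay hypothesis. Both estimates in Theorem \ref{thm:rsvd} are of the form
\begin{equation*}
  \|A-QQ^*A\| \leq \alpha_{k,p}\,\sigma_{k+1} + \beta_{k,p}\Big(\sum_{j>k}\sigma_j^2\Big)^{1/2},
\end{equation*}
with explicit constants $\alpha_{k,p},\beta_{k,p}$ that already match those appearing in the corollary, so the only real task is to rewrite $(\sum_{j>k}\sigma_j^2)^{1/2}$ as a constant times $\sigma_{k+1}$.

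Under the assumption $\sigma_j=c_0c_1^{j}$ with $c_1\in(0,1)$, the tail is a geometric series:
\begin{equation*}
  \sum_{j>k}\sigma_j^2 \;\leq\; c_0^2\sum_{j=k+1}^{\infty} c_1^{2j} \;=\; \frac{c_0^2 c_1^{2(k+1)}}{1-c_1^2} \;=\; \frac{\sigma_{k+1}^2}{1-c_1^2},
\end{equation*}
so $(\sum_{j>k}\sigma_j^2)^{1/2}\leq\sigma_{k+1}/(1-c_1^2)^{1/2}$. I extend the sum to $+\infty$ to obtain a valid upper bound even though $A$ has finite rank and $\sigma_j$ eventually vanishes.

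Substituting this inequality into each of the two bounds from Theorem \ref{thm:rsvd} and collecting the common factor $\sigma_{k+1}$ produces the two stated estimates, with the probability statements inherited verbatim from Theorem \ref{thm:rsvd}. There is no genuine obstacle: the argument is a direct substitution, and the only minor subtlety is the harmless passage to an infinite geometric tail noted above.
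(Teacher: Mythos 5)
Your proposal is correct and is exactly the paper's argument: the paper presents the corollary as an ``immediate'' consequence of Theorem \ref{thm:rsvd}, obtained by bounding the geometric tail $\sum_{j>k}\sigma_j^2$ by $\sigma_{k+1}^2/(1-c_1^2)$ and substituting into both estimates. Your remark about extending the finite sum to an infinite one is the right way to handle the finite rank of $A$, and the constants you obtain match the statement.
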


So far we have assumed that $A$ is tall, i.e., $n\geq m$. For the
case $n<m$, one may apply RSVD to $A^*$, which gives
rise to Algorithm \ref{alg:rsvd*}.

\begin{algorithm}
  \centering
  \caption{RSVD for $A\in\mathbb{R}^{n\times m},\ n<m$.\label{alg:rsvd*}}
  \begin{algorithmic}[1]
    \STATE Input matrix $A\in\mathbb{R}^{n\times m}$, $n<m$, and target rank $k$;
    \STATE Set parameters $p$ (default $p=5$), and $q$ (default $q=0$);
    \STATE Sample a random matrix $\Omega=(\omega_{ij})\in\mathbb{R}^{(k+p)\times n}$, with $\omega_{ij}\sim N(0,1)$;
    \STATE Compute the randomized matrix $Y=\Omega A(A^*A)^q$;
    \STATE Find an orthonormal basis $Q$ of $\mathrm{range}(Y^*)$ by QR decomposition;
    \STATE Find the matrix $B=AQ$;
    \STATE Compute the SVD of $B=USV^*$;
    \STATE Return the rank $k$ approximation $(\tilde U_k,\tilde \Sigma_k,\tilde V_k)$.
  \end{algorithmic}
\end{algorithm}

The efficiency of RSVD resides crucially on the truly low-rank nature of the problem.
The precise spectral decay is generally unknown for many practical inverse problems,
although there are known estimates for several model problems, e.g., X-ray transform \cite{Maass:1987}
and magnetic particle imaging \cite{KluthJinLi:2018}. The decay rates generally worsen with the increase of
the spatial dimension $d$, at least for integral operators \cite{GriebelLi:2018}, which can potentially
hinder the application of RSVD type techniques to high-dimensional problems.

\section{Efficient regularized linear inversion with RSVD}\label{sec:reg}

Now we develop efficient inversion techniques based on RSVD for problem \eqref{eqn:lininv} via truncated SVD (TSVD), Tikhonov
regularization and Tikhonov regularization with a smoothness penalty \cite{EnglHankeNeubauer:1996,ItoJin:2015}.
For large-scale inverse problems, this can  be expensive, since they either involve full SVD or large dense linear systems.
We aim at reducing the cost by exploiting the inherent low-rank structure for inverse problems,
and accurately constructing a low-rank approximation by RSVD. This idea has been pursued recently
\cite{BoutsidisMagdon:2014,JiaWang:2018,XiangZou:2013,XiangZou:2015,WeiXieZhang:2016}. Our work is along the same line in
of research but with a unified framework for deriving all three approaches and interpreting the approach in the lens of convex duality.

The key observation is the range type condition on the approximation $\tilde x$:
\begin{equation}\label{eqn:range}
  \tilde x \in\mathcal{R}(B),
\end{equation}
with the matrix $B$ is given by
\begin{equation*}
 B =\left\{\begin{array}{ll}
    A^*, & \text{truncated SVD, Tikhonov},\\
    L^\dag L^{*\dag}A^*, & \text{general Tikhonov},
  \end{array}\right.
\end{equation*}
where $L$ is a regularizing matrix, typically chosen to the finite difference approximation of the
first- or high-order derivatives \cite{EnglHankeNeubauer:1996}. Similar to \eqref{eqn:range},
the approximation $\tilde x$ is assumed to live in $\mathrm{span}(\{v_i\}_{i=1}^k)$ in
\cite{ZhangMahdaviJinYang:2014} for Tikhonov regularization, which is
slightly more restrictive than \eqref{eqn:range}.
An analogous condition on the exact solution $x^\dag$ reads
\begin{equation}\label{eqn:range-ex}
 x^\dag = B w
\end{equation}
for some $w\in\mathbb{R}^n$. In regularization theory \cite{EnglHankeNeubauer:1996,ItoJin:2015},
\eqref{eqn:range-ex} is known as source condition, and can be viewed as the Lagrange multiplier for the
equality constraint $Ax^\dag=b^\dag$, whose existence is generally not ensured for infinite-dimensional problems.
It is often employed to bound the error $\|\tilde x-x^\dag\|$ of the approximation $\tilde x$ in terms
of the noise level $\delta$. The construction below explicitly maintains \eqref{eqn:range},
thus preserving the structure of the regularized solution $\tilde x$. We will interpret the construction by
convex analysis. Below we develop three efficient computational schemes based on RSVD.

\subsection{Truncated RSVD}\label{ssec:tsvd}

Classical truncated SVD (TSVD) stabilizes problem \eqref{eqn:lininv} by looking for the least-squares solution of
\begin{equation*}
  \min \|A_k x_k - b\|,\quad \quad \mbox{with } A_k =U_k\Sigma_k V_k^*.
\end{equation*}
Then the regularized solution $x_k $ is given by
\begin{equation*}
  x_k = A_k^\dag b = V_k \Sigma_k^{-1}U_k^* b=\sum_{i=1}^k\sigma_i^{-1}(u_i,b)v_i.
\end{equation*}
The truncated level $k\leq \mathrm{rank}(A)$ plays the role of a regularization parameter,
and determines the strength of regularization. TSVD requires computing
the (partial) SVD of $A$, which is expensive for large-scale problems. Thus, one can substitute a rank-$k$ RSVD
$(\tilde U_k,\tilde \Sigma_k,\tilde V_k)$, leading to truncated RSVD (TRSVD):
\begin{equation*}
  \hat x_k = \tilde V_k \tilde \Sigma_k^{-1}\tilde U_k^*b.
\end{equation*}
By Lemma \ref{lem:Weyl}, $\tilde A_k=\tilde U_k\tilde \Sigma_k\tilde V_k^*$ is indeed of rank $k$, if
$\|A-\tilde A_k\|<\sigma_k$. This approach was adopted in \cite{BoutsidisMagdon:2014}. Based on RSVD, we
propose an approximation $\tilde x_k$ defined by
\begin{equation}\label{eqn:TrSVD}
  \tilde x_k = A^*(\tilde A_k\tilde A_k^*)^{\dag}b = A^*\sum_{i=1}^k\frac{(\tilde u_i,b)}{\tilde \sigma_i^2}\tilde u_i.
\end{equation}
By its construction, the range condition \eqref{eqn:range} holds for $\tilde x_k$.
To compute $\tilde x_k$, one does not need the complete RSVD $(\tilde U_k,\tilde\Sigma_k,\tilde V_k)$
of rank $k$, but only $(\tilde U_k,\tilde \Sigma_k)$, which is advantageous for
complexity reduction \cite[p. 254]{GolubvanLoan:1996}. Given the RSVD
$(\tilde U_k,\tilde \Sigma_k)$, computing $\tilde x_k$ by \eqref{eqn:TrSVD} incurs
only $O(nk+nm)$ operations.

\subsection{Tikhonov regularization}\label{ssec:t-Tikh}

Tikhonov regularization stabilizes \eqref{eqn:lininv} by minimizing the following functional
\begin{equation*}
 J_\alpha (x) = \tfrac{1}{2}\|Ax - b\|^2 + \tfrac{\alpha}{2}\|x\|^2,
\end{equation*}
where $\alpha>0$ is the regularization parameter. The regularized solution $x_\alpha$ is given by
\begin{equation}\label{eqn:x_tikh}
  x_\alpha = (A^*A+\alpha I)^{-1}A^*b = A^*(AA^*+\alpha I)^{-1}b.
\end{equation}
The latter identity verifies \eqref{eqn:range}. The cost of the step in \eqref{eqn:x_tikh} is about
$nm^2+\frac{m^3}{3}$ or $mn^2+\frac{n^3}{3}$ \cite[p. 238]{GolubvanLoan:1996}, and thus it is
expensive for large scale problems. One approach to accelerate the computation is to apply the RSVD
approximation $\tilde A_k =\tilde U_k\tilde\Sigma_k\tilde V_k^*$. Then one
obtains a regularized approximation \cite{XiangZou:2013}
\begin{equation}\label{eqn:t-tikh_x-xz}
  \hat x_\alpha = (\tilde A_k^*\tilde A_k+\alpha I)^{-1}\tilde A_k^*b.
\end{equation}
To preserve the range property \eqref{eqn:range}, we propose an alternative
\begin{equation}\label{eqn:t-tikh_x}
  \tilde x_\alpha = A^*(\tilde A_k\tilde A_k^*+\alpha I)^{-1}b=A^* \sum_{i=1}^k \frac{(\tilde u_i,b)}{\tilde \sigma_i^2+\alpha}\tilde u_i.
\end{equation}
For $\alpha\to0^+$, $\tilde x_\alpha$ recovers the TRSVD $\tilde x_k$ in \eqref{eqn:TrSVD}.
Given RSVD $(\tilde U_k,\tilde \Sigma_k)$, the complexity of computing $\tilde x_\alpha $
is nearly identical with the TRSVD $\tilde x_k$.

\subsection{General Tikhonov regularization}\label{ssec:t-gTikh}

Now we consider Tikhonov regularization with a general smoothness penalty:
\begin{equation}\label{eqn:gTikh}
  J_\alpha(x) = \tfrac{1}{2}\|Ax-b\|^2 + \tfrac{\alpha}{2}\|Lx\|^2,
\end{equation}
where $L\in\mathbb{R}^{\ell\times m}$ is a regularizing matrix enforcing smoothness. Typical choices
of $L$ include first-order and second-order derivatives. We
assume $\mathcal{N}(A)\cap \mathcal{N}(L)=\{0\}$ so that $J_\alpha$ has a unique minimizer $x_\alpha$.
By the identity
\begin{equation}\label{eqn:matrix-id}
  (A^*A+\alpha I)^{-1}A^* = A^*(AA^*+\alpha I)^{-1},
\end{equation}
if $\mathcal{N}(L)=\{0\}$, the minimizer $x_\alpha$ to $J_\alpha$ is given by (with $\Gamma=L^{\dag}L^{\dag*}$)
\begin{align}
    x_\alpha &= (A^*A+\alpha L^*L)^{-1}(A^*y) \nonumber\\
    &= L^{\dag}((AL^{\dag})^* AL^{\dag} + \alpha I)^{-1}(AL^{\dag})^*b\nonumber \\
    &= \Gamma A^*(A\Gamma A^* +\alpha I)^{-1}b.\label{eqn:solrep-gen}
\end{align}
The $\Gamma$ factor reflects the smoothing property of $\|Lx\|^2$.
Similar to \eqref{eqn:t-tikh_x}, we approximate $B:=AL^\dag$ via RSVD:
$ \tilde B_k= U_k\Sigma_kV_k^*$,
and obtain a regularized solution $\tilde x_\alpha$ by
\begin{equation}\label{eqn:t-tikh_x-gen}
  \tilde x_\alpha = \Gamma A^*(\tilde B_k \tilde B_k^* +\alpha I)^{-1}b.
\end{equation}
 It differs from
\cite{XiangZou:2015} in that \cite{XiangZou:2015} uses only the RSVD approximation
of $A$, thus it does not maintain the range condition \eqref{eqn:range-ex-gen}.
The first step of Algorithm \ref{alg:rsvd}, i.e., $AL^{-1}\Omega$, is to
probe $\mathcal{R}(A)$ with colored Gaussian noise with covariance $\Gamma$.

Numerically, it also involves applying $\Gamma$, which can be carried out efficiently
if $L$ is structured. If $L$ is rectangular, we have the following decomposition \cite{Elden:1982,
XiangZou:2013}. The $A$-weighted pseudoinverse $L^\#$ \cite{Elden:1982}
can be computed efficiently, if $L^\dag$ is easy to compute and the dimensionality of $W$ is small.
\begin{lemma}\label{lem:solrep-gtikh}
Let $W$ and $Z$ be any matrices satisfying $\mathcal{R}(W)=\mathcal{N}(L)$, $\mathcal{R}(Z)=\mathcal{R}(L)$, $Z^*Z=I$,
and $L^\#=(I-W(AW)^\dag A)L^\dag$. Then the solution $x_\alpha$ to \eqref{eqn:gTikh} is given by
\begin{equation}\label{eqn:x-al-ZW}
  x_\alpha = L^\#Z\xi_\alpha + W(AW)^\dag b,
\end{equation}
where the variable $\xi_\alpha$ minimizes $\frac12\|AL^\#Z\xi-b\|^2 + \frac\alpha2 \|\xi\|^2$.
\end{lemma}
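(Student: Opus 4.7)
The plan is to perform the classical Eld\'en change of variables: write every $x\in\mathbb{R}^m$ as $x = L^\#Z\xi + W\eta$ with $\xi\in\mathbb{R}^{\mathrm{rank}(L)}$ and $\eta\in\mathbb{R}^{\dim\mathcal{N}(L)}$, and then decouple $J_\alpha$ in the new variables. Since $\mathrm{rank}(L) + \dim\mathcal{N}(L) = m$, existence and uniqueness of this decomposition follow once I check that $\mathcal{R}(L^\#Z)\cap\mathcal{N}(L) = \{0\}$. The key auxiliary computations are
\[ LL^\# = LL^\dag - LW(AW)^\dag AL^\dag = LL^\dag \]
(using $LW=0$, which holds because $\mathcal{R}(W)=\mathcal{N}(L)$) together with $LL^\dag Z = Z$ (since $\mathcal{R}(Z) = \mathcal{R}(L)$); these give $L(L^\#Z\xi) = Z\xi$, so any $L^\#Z\xi \in \mathcal{N}(L)$ forces $\xi=0$ by $Z^*Z=I$.

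With $x = L^\#Z\xi + W\eta$, the penalty term collapses to $\|Lx\|^2 = \|Z\xi\|^2 = \|\xi\|^2$. For the data-fidelity term, the crucial orthogonality is that $AL^\#Z\,\xi \perp \mathcal{R}(AW)$ for every $\xi$, which follows from the factorization
\[ AL^\# = (I - AW(AW)^\dag)\,AL^\dag \]
together with the fact that $AW(AW)^\dag$ is the orthogonal projector onto $\mathcal{R}(AW)$. Splitting $b = q_1 + q_2$ with $q_2 := AW(AW)^\dag b$ and $q_1 := b - q_2 \in \mathcal{R}(AW)^\perp$, orthogonality yields
\[ \|Ax-b\|^2 = \|AL^\#Z\xi - q_1\|^2 + \|AW\eta - q_2\|^2. \]

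The standing assumption $\mathcal{N}(A)\cap\mathcal{N}(L) = \{0\}$ combined with $\mathcal{R}(W)=\mathcal{N}(L)$ (and $W$ having full column rank as a basis for $\mathcal{N}(L)$) forces $AW$ to have full column rank. Hence the $\eta$-subproblem $\min_\eta\|AW\eta-q_2\|^2$ has the unique minimizer $\eta_\alpha = (AW)^\dag q_2 = (AW)^\dag b$, producing the $W(AW)^\dag b$ summand in \eqref{eqn:x-al-ZW}. Using $AL^\#Z\xi\perp q_2$, the $\xi$-subproblem $\min_\xi \tfrac12\|AL^\#Z\xi - q_1\|^2 + \tfrac\alpha2\|\xi\|^2$ differs from the claimed reduced problem $\min_\xi \tfrac12\|AL^\#Z\xi - b\|^2 + \tfrac\alpha2\|\xi\|^2$ only by the constant $\tfrac12\|q_2\|^2$, so the two share the same minimizer $\xi_\alpha$.

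The main technical hurdle is isolating the two orthogonality relations $LW=0$ and $AL^\#Z\,\xi\perp\mathcal{R}(AW)$, both of which hinge on recognizing that the construction $L^\# = (I-W(AW)^\dag A)L^\dag$ is precisely the $A$-weighted correction that removes the $\mathcal{R}(AW)$-component from $AL^\dag$. Once these are in place, the rest is routine separable least-squares bookkeeping; a final small verification is the injectivity of $AW$, which is exactly what the hypothesis $\mathcal{N}(A)\cap\mathcal{N}(L)=\{0\}$ was introduced to guarantee.
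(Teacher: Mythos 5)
Your proof is correct. The paper itself gives no proof of this lemma---it is quoted as a known result with a citation to Eld\'en's work on the standard-form transformation---and your argument is precisely the classical derivation behind that citation: the splitting $x=L^\#Z\xi+W\eta$, the identities $LW=0$ and $AL^\#=(I-AW(AW)^\dag)AL^\dag$ giving the orthogonal decoupling of the fidelity term, and the separable minimization over $\xi$ and $\eta$. All steps check out (including the reduction from $q_1$ to $b$ in the $\xi$-subproblem via the constant $\tfrac12\|q_2\|^2$); the only cosmetic point is that you implicitly take $W$ to have full column rank, which is not strictly needed since $\mathcal{N}(AW)=\mathcal{N}(W)$ under the standing assumption $\mathcal{N}(A)\cap\mathcal{N}(L)=\{0\}$, so $W(AW)^\dag b$ is the correct kernel component in any case.
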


Lemma \ref{lem:solrep-gtikh} does not necessarily entail an efficient scheme, since it requires
an orthonormal basis $Z$ for $\mathcal{R}(L)$. Hence, we restrict our discussion to the case:
\begin{equation}\label{eqn:cond-L}
   L\in\mathbb{R}^{\ell\times m} \quad\mbox{ with } \mathrm{rank}(L)=\ell<m.
\end{equation}
It arises most commonly in practice, e.g., first-order or second-order derivative, and there
are efficient ways to perform standard-form reduction. 
Then we can let $Z=I_\ell$. By slightly abusing the notation $\Gamma = L^\#L^{\#*}$, by
Lemma \ref{lem:solrep-gtikh}, we have
\begin{equation*}
  \begin{aligned}
    x_\alpha &= L^\#((AL^\#)^*AL^\#+\alpha I)^{-1}(AL^\#)^*b + W(AW)^\dag b\\
      & = \Gamma A^*(A\Gamma A^*+\alpha I)^{-1}b + W(AW)^\dag b.
  \end{aligned}
\end{equation*}
The first term is nearly identical with \eqref{eqn:solrep-gen}, with $L^\#$ in place of $L^\dag$, and the
extra term $W(AW)^\dag b$ belongs to $\mathcal{N}(L)$. Thus, we obtain an approximation $\tilde x_\alpha$ defined by
\begin{equation}\label{eqn:t-tikh_x-gen2}
  \tilde x_\alpha = \Gamma A^* (\tilde B_k\tilde B_k + \alpha I)^{-1}b + W(AW)^\dag b,
\end{equation}
where $\tilde B_k$ is a rank-$k$ RSVD to $B\equiv AL^\#$. The matrix $B$ can be implemented implicitly
via matrix-vector product to maintain the efficiency.

\subsection{Dual interpretation}
Now we give an interpretation of \eqref{eqn:t-tikh_x-gen} in the lens of Fenchel duality theory in Banach
spaces (see, e.g., \cite[Chapter II.4]{EkelandTemam:1999}). Recall that for a functional $F:X\to
\overline{\mathbb{R}}:=\mathbb{R}\cup\{\infty\}$ defined on a Banach space $X$, let
$F^*:X^*\to \overline{\mathbb{R}}$ denote the Fenchel conjugate of $F$ given for $x^*\in X^*$ by
\begin{equation*}
  F^*(x^*) = \sup_{x\in X} \langle x^*,x\rangle_{X^*,X}-F(x).
\end{equation*}
Further, let
$\partial F(x):=\{x^*\in X^*: \langle x^*,\tilde x-x\rangle_{X^*,X}\leq F(\tilde x)-F(x)\ \ \forall \tilde x\in X\}$
be the subdifferential of the convex functional $F$ at $x$, which coincides with G\^{a}teaux
derivative $F'(x)$ if it exists. The Fenchel duality theorem states that if $F:X\to \overline{
\mathbb{R}}$ and $G:Y\to\overline{\mathbb{R}}$ are proper, convex and lower semicontinuous
functionals on the Banach spaces $X$ and $Y$, $\Lambda:X\to Y$ is a continuous linear operator,
and there exists an $x_0\in W$ such that $F(x_0)<\infty$, $G(\Lambda x_0)<\infty$, and $G$ is
continuous at $\Lambda x_0$, then
\begin{equation*}
  \inf_{x\in X} F(x)+G(\Lambda x) = \sup_{y^*\in Y^*} -F^*(\Lambda^*y^*)-G^*(-y^*),
\end{equation*}
Further, the equality is attained at $(\bar x, \bar y^*)\in X\times Y^*$ if and only if
\begin{align}\label{eqn:Fenchel}
    \Lambda^* \bar y^* \in \partial F(\bar x)\quad\mbox{and} \quad
    - \bar y^* \in \partial G(\Lambda \bar x),
\end{align}
hold \cite[Remark III.4.2]{EkelandTemam:1999}.

The next result indicates that the approach in Sections \ref{ssec:t-Tikh}-\ref{ssec:t-gTikh} first applies RSVD
to the dual problem to obtain an approximate dual $\tilde p_\alpha$, and then recovers the optimal primal $\tilde x_\alpha$ via
duality relation \eqref{eqn:Fenchel}. This connection is in the same spirit of dual random projection
\cite{WangLeeMahdaviKolar:2017,ZhangMahdaviJinYang:2014}, and it opens up the avenue to extend RSVD to
functionals whose conjugate is simple, e.g., nonsmooth fidelity.

\begin{prop}\label{prop:dual}
If $\mathcal{N}(L)=\{0\}$, then $\tilde x_\alpha$ in \eqref{eqn:t-tikh_x-gen} is
equivalent to RSVD for the dual problem.
\end{prop}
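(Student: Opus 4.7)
The plan is to compute the Fenchel dual of the functional \eqref{eqn:gTikh} explicitly, identify the dual variable with the vector that appears inside the inverse in \eqref{eqn:t-tikh_x-gen}, and then show that applying RSVD to the matrix that governs the dual quadratic, followed by primal recovery through the extremality relation \eqref{eqn:Fenchel}, reproduces $\tilde x_\alpha$.

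First I would cast $J_\alpha$ in the form $F(x)+G(\Lambda x)$ with $\Lambda=A$, $G(y)=\tfrac12\|y-b\|^2$, and $F(x)=\tfrac{\alpha}{2}\|Lx\|^2$. The conjugate $G^*(y^*)=\tfrac12\|y^*\|^2+(y^*,b)$ is immediate. Under the hypothesis $\mathcal{N}(L)=\{0\}$, the matrix $L^*L$ is invertible, so $F$ is strongly convex and a direct calculation gives $F^*(z)=\tfrac{1}{2\alpha}(z,\Gamma z)$ with $\Gamma=(L^*L)^{-1}=L^\dag L^{\dag*}$. Writing $B:=AL^\dag$ so that $A\Gamma A^*=BB^*$, Fenchel duality then turns the primal into
\[
   \sup_{p\in\mathbb{R}^n}\ -\tfrac{1}{2\alpha}\|B^*p\|^2-\tfrac12\|p\|^2+(p,b),
\]
whose unique maximizer is $p_\alpha=\alpha(BB^*+\alpha I)^{-1}b$. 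Applying the first extremality relation in \eqref{eqn:Fenchel}, $A^*p_\alpha\in\partial F(x_\alpha)=\{\alpha L^*Lx_\alpha\}$, yields $x_\alpha=\tfrac{1}{\alpha}\Gamma A^*p_\alpha=\Gamma A^*(BB^*+\alpha I)^{-1}b$, which recovers \eqref{eqn:solrep-gen} and confirms the set-up.

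The central step is then to observe that performing RSVD on the matrix $B=AL^\dag$ and substituting the resulting $\tilde B_k$ into the dual quadratic gives the sketched dual
\[
   \min_p\ \tfrac{1}{2\alpha}\|\tilde B_k^*p\|^2+\tfrac12\|p\|^2-(p,b),
\]
with unique minimizer $\tilde p_\alpha=\alpha(\tilde B_k\tilde B_k^*+\alpha I)^{-1}b$. Since the extremality relation $A^*\bar p\in\partial F(\bar x)$ depends on $B$ only through $\bar p$, applying it to $\tilde p_\alpha$ produces the primal approximation
\[
   \tilde x_\alpha=\tfrac{1}{\alpha}\Gamma A^*\tilde p_\alpha=\Gamma A^*(\tilde B_k\tilde B_k^*+\alpha I)^{-1}b,
\]
which is precisely \eqref{eqn:t-tikh_x-gen}. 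This is exactly the sense in which \eqref{eqn:t-tikh_x-gen} is ``RSVD for the dual problem''.

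The main obstacle I expect is the bookkeeping around $F^*$: the formula $F^*(z)=\tfrac{1}{2\alpha}(z,\Gamma z)$ is only correct because the injectivity of $L$ lets us identify $\Gamma$ with $(L^*L)^{-1}$ rather than a genuine pseudoinverse; without this assumption $F^*$ takes the value $+\infty$ off $\mathcal{R}(L^*)$ and the dual loses its clean quadratic form. Once this is settled, the remainder is a verification that RSVD touches only the $BB^*$ term in the dual quadratic and that the Fenchel extremality relation transfers unchanged from the exact to the sketched minimizer, yielding the claimed equivalence.
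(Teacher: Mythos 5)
Your proposal is correct and follows essentially the same route as the paper: form the Fenchel dual of $J_\alpha$, identify the optimal dual as $\xi_\alpha=\alpha(BB^*+\alpha I)^{-1}b$ with $B=AL^\dag$, replace $B$ by its RSVD $\tilde B_k$ in the dual quadratic, and recover the primal via the extremality relation $x=\alpha^{-1}\Gamma A^*\xi$. The only (cosmetic) differences are that you write the dual with the term $(p,b)$ rather than $-\tfrac12\|\xi-b\|^2$ (they differ by a constant) and you record the conjugate $F^*$ with the correct positive sign, whereas the paper's displayed $F^*$ carries a sign typo that cancels in its final dual functional.
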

\begin{proof}
For any symmetric positive semidefinite $Q$, the conjugate functional
$F^*$ of $F(x) = \frac{\alpha}{2}x^*Qx$ is given by $F^*(\xi) =-\frac{1}{2\alpha}\xi^*Q^\dag \xi$, with
its domain being $\mathcal{R}(Q)$. By SVD, we have $(L^*L)^\dag=L^\dag L^{\dag*}$, and
thus $F^*(\xi)=-\frac{1}{2\alpha}\|L^{\dag*}\xi\|^2$. Hence, by Fenchel duality theorem, the conjugate
$J_\alpha^*(\xi)$ of $J_\alpha(x)$ is given by
\begin{equation*}
  J^*_\alpha (\xi) : = -\tfrac{1}{2\alpha}\|L^{\dag*} A^* \xi\|^2 - \tfrac{1}{2}\|\xi-b\|^2.
\end{equation*}
Further, by \eqref{eqn:Fenchel}, the optimal primal and dual pair $(x_\alpha,\xi_\alpha)$ satisfies
\begin{equation*}
 \alpha L^*L x_\alpha = A^*\xi_\alpha\quad \mbox{and}\quad \xi_\alpha = b-Ax_\alpha.
\end{equation*}
Since $\mathcal{N}(L)=\{0\}$, $L^*L$ is invertible, and thus $x_\alpha = \alpha^{-1} (L^*L)^{-1}
A^*\xi_\alpha = \alpha^{-1} \Gamma A^*\xi_\alpha$. The optimal dual $\xi_\alpha$ is given by
$\xi_\alpha = \alpha (AL^{\dag}L^{*\dag} A^*+\alpha I)^{-1}b$. To approximate $\xi_\alpha$ by
$\tilde \xi_\alpha$, we employ the RSVD approximation $\tilde B_k$ to $B=AL^\dag$ and solve
\begin{equation*}
  \tilde \xi_\alpha =\arg\max_{\xi\in\mathbb{R}^n} \{-\tfrac{1}{2\alpha}\|\tilde B_k^*
  \xi\|^2 - \tfrac12\|\xi-b\|^2\}.
\end{equation*}
We obtain an approximation via the relation
$\tilde x_\alpha = \alpha^{-1}\Gamma A^*\tilde \xi_\alpha$, recovering \eqref{eqn:t-tikh_x-gen}.\qed
\end{proof}

\begin{remark}
For a general regularizing matrix $L$, one can appeal to the decomposition in Lemma \ref{lem:solrep-gtikh},
by applying first the standard transformation and then approximating the regularized part
via convex duality.
\end{remark}

\section{Error analysis}\label{sec:error}

Now we derive error estimates for the approximation $\tilde x$ with respect to the true solution $x^\dag$, under
sourcewise type conditions. In addition to bounding the error, the estimates provide
useful guidelines on constructing the approximation $\tilde A_k$.

\subsection{Truncated RSVD}
We derive an error estimate under the source condition \eqref{eqn:range-ex}.
We use the projection matrices $P_k=U_kU_k^*$ and $\tilde P_k=
\tilde U_k\tilde U_k^*$ frequently below.

\begin{lemma}\label{lem:est-AA_k}
For any $k\leq r$ and $\|A-\tilde A_k\|\leq \sigma_k/2$, there holds
$\|A^* (\tilde A_k^*)^\dag\|\leq 2.$
\end{lemma}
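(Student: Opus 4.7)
The plan is to split $A^*(\tilde A_k^*)^\dag$ into a projection piece plus a perturbation piece, bound each separately, and pull the hypothesis through Weyl's inequality to control the pseudoinverse norm.

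First I would write
\begin{equation*}
 A^*(\tilde A_k^*)^\dag = \tilde A_k^*(\tilde A_k^*)^\dag + (A^*-\tilde A_k^*)(\tilde A_k^*)^\dag.
\end{equation*}
The first term is the orthogonal projector onto $\mathcal{R}(\tilde A_k^*)$, so its spectral norm is at most $1$. For the second term, submultiplicativity gives
\begin{equation*}
  \|(A^*-\tilde A_k^*)(\tilde A_k^*)^\dag\| \le \|A-\tilde A_k\|\,\|(\tilde A_k^*)^\dag\|.
\end{equation*}

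Next I would bound $\|(\tilde A_k^*)^\dag\|=1/\tilde\sigma_k$ via Lemma \ref{lem:Weyl}: since $|\tilde\sigma_k-\sigma_k|\le \|A-\tilde A_k\|\le \sigma_k/2$ and $\sigma_k>0$ (because $k\le r$), we obtain $\tilde\sigma_k\ge \sigma_k/2>0$. In particular $\tilde A_k$ really has rank $k$ and the pseudoinverse satisfies $\|(\tilde A_k^*)^\dag\|\le 2/\sigma_k$. Multiplying, the perturbation piece is bounded by $(\sigma_k/2)\cdot(2/\sigma_k)=1$, so the full estimate gives $\|A^*(\tilde A_k^*)^\dag\|\le 1+1=2$.

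There is no real obstacle here; the only subtlety is making sure the rank-$k$ assumption needed to interpret $1/\tilde\sigma_k$ as $\|(\tilde A_k^*)^\dag\|$ actually holds, which is precisely what Weyl's inequality delivers under the hypothesis $\|A-\tilde A_k\|\le \sigma_k/2$. The proof is a short two-line computation once the decomposition and the Weyl bound are in place.
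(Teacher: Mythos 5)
Your argument is correct and follows essentially the same route as the paper: the same decomposition $A^*(\tilde A_k^*)^\dag = \tilde A_k^*(\tilde A_k^*)^\dag + (A-\tilde A_k)^*(\tilde A_k^*)^\dag$, the bound of the first term by $1$, and Weyl's inequality (Lemma \ref{lem:Weyl}) to get $\tilde\sigma_k\ge\sigma_k/2$ and hence bound the second term by $1$. Your explicit identification of $\tilde A_k^*(\tilde A_k^*)^\dag$ as an orthogonal projector is a slightly cleaner justification of the first bound than the paper's shorthand, but the proof is the same.
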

\begin{proof}
It follows from the decomposition $A=\tilde P_k A + (I-\tilde P_k)A=\tilde A_k + (I-\tilde P_k)A$ that
\begin{equation*}
  \begin{aligned}
    \|A^*(\tilde A_k^*)^\dag\| & = \|(\tilde A_k+(I-\tilde P_k)A)^*(\tilde A_k^*)^\dag\|\leq \|\tilde A_k^*(\tilde A_k^*)^{-1}\| + \|A-\tilde A_k\|\|\tilde A_k^{-1}\|\\
      & \leq 1 + \tilde\sigma_k^{-1}\|A-\tilde A_k\|.
  \end{aligned}
\end{equation*}
Now the condition $\|A-\tilde A_k\|\leq \sigma_k/2$ and Lemma \ref{lem:Weyl} imply
$\tilde\sigma_k\geq \sigma_k-\|A-\tilde A_k\|\geq \sigma_k/2$, from which the desired estimate follows.\qed
\end{proof}

Now we can state an error estimate for the approximation $\tilde x_k$.
\begin{theorem}\label{thm:err-trsvd}
If Condition \eqref{eqn:range-ex} holds and $\|A-\tilde A_k\|\leq \sigma_k/2$, then for the  estimate
$\tilde x_k$ in \eqref{eqn:TrSVD}, there holds
\begin{equation*}
  \|x^\dag - \tilde x_k\|\leq 4\delta\sigma_k^{-1} + 8\sigma_1\sigma_k^{-1}\|A_k-\tilde A_k\|\|w\| +\sigma_{k+1}\|w\|.
\end{equation*}
\end{theorem}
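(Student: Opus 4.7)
The plan is to substitute $b = Ax^\dag + e$ together with the source condition $x^\dag = A^*w$ into
$$x^\dag - \tilde x_k = A^*w - A^*(\tilde A_k\tilde A_k^*)^\dag b,$$
splitting into a noise contribution $-A^*(\tilde A_k\tilde A_k^*)^\dag e$ and a signal contribution $A^*w - A^*(\tilde A_k\tilde A_k^*)^\dag AA^*w$, and to bound each separately using the machinery already developed (Lemmas \ref{lem:est-AA_k} and \ref{lem:Weyl}), together with two key identities coming from the RSVD structure $\tilde A_k = \tilde P_k A = \tilde U_k\tilde \Sigma_k \tilde V_k^*$.

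For the noise term, I would factor $(\tilde A_k\tilde A_k^*)^\dag = (\tilde A_k^*)^\dag \tilde A_k^\dag$ and estimate
$$\|A^*(\tilde A_k\tilde A_k^*)^\dag e\| \le \|A^*(\tilde A_k^*)^\dag\|\,\|\tilde A_k^\dag\|\,\delta.$$
Lemma \ref{lem:est-AA_k} gives $\|A^*(\tilde A_k^*)^\dag\|\le 2$, and the hypothesis $\|A-\tilde A_k\|\le \sigma_k/2$ combined with Lemma \ref{lem:Weyl} yields $\tilde\sigma_k\ge\sigma_k/2$, so $\|\tilde A_k^\dag\|\le 2/\sigma_k$. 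This produces the leading $4\delta\sigma_k^{-1}$ term.

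For the signal term the heart of the argument is two identities. From $A^*\tilde U_k = \tilde V_k\tilde\Sigma_k$ (a direct consequence of $\tilde A_k = \tilde P_k A$ and the SVD of $\tilde A_k$), I obtain
$$A^*(\tilde A_k\tilde A_k^*)^\dag = A^*\tilde U_k\tilde\Sigma_k^{-2}\tilde U_k^* = \tilde V_k\tilde\Sigma_k^{-1}\tilde U_k^* = \tilde A_k^\dag;$$
and from $\tilde U_k^* A = \tilde\Sigma_k\tilde V_k^*$ (same source), I get $\tilde A_k^\dag A = \tilde V_k\tilde V_k^*$. Hence the signal part collapses to
$$A^*w - \tilde V_k\tilde V_k^* A^*w = (I-\tilde V_k\tilde V_k^*)A^*w.$$
Because $\tilde A_k^*$ lies in $\mathcal{R}(\tilde V_k)$, we have $(I-\tilde V_k\tilde V_k^*)\tilde A_k^*=0$, which lets me rewrite this as $(I-\tilde V_k\tilde V_k^*)(A-\tilde A_k)^*w$, bounded by $\|A-\tilde A_k\|\|w\|$. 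The triangle inequality $\|A-\tilde A_k\|\le \|A-A_k\|+\|A_k-\tilde A_k\|=\sigma_{k+1}+\|A_k-\tilde A_k\|$ then yields the term $\sigma_{k+1}\|w\|+\|A_k-\tilde A_k\|\|w\|$; absorbing the last piece into $8\sigma_1\sigma_k^{-1}\|A_k-\tilde A_k\|\|w\|$ using the trivial estimate $1\le\sigma_1/\sigma_k$ concludes the proof.

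The main obstacle is recognizing that the two identities $A^*(\tilde A_k\tilde A_k^*)^\dag = \tilde A_k^\dag$ and $\tilde A_k^\dag A = \tilde V_k\tilde V_k^*$ hold despite $\tilde A_k$ being only a projection of $A$; they are not automatic from the SVD of $\tilde A_k$ alone but rely critically on $\tilde A_k = \tilde P_k A$, which is exactly the range-preserving structure \eqref{eqn:rsvd} built into the RSVD construction. Once this observation is in hand, the rest of the proof reduces to triangle-inequality bookkeeping.
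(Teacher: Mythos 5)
Your proof is correct, and it takes a genuinely different route from the paper's. The paper decomposes $\tilde x_k-x^\dag$ into three pieces involving the \emph{exact} truncation $A_k$, namely a noise term, a perturbation term $A^*[(\tilde A_k\tilde A_k^*)^\dag-(A_kA_k^*)^\dag]AA^*w$ controlled via the pseudoinverse perturbation bound of Lemma \ref{lem:pseudoinverse2}, and a truncation term $P_k^\perp A^*(AA^*)^\dag b^\dag$ bounded by $\sigma_{k+1}\|w\|$; it is the perturbation term that produces the factor $8\sigma_1\sigma_k^{-1}$. You instead exploit the structural identity \eqref{eqn:rsvd}, $\tilde A_k=\tilde P_kA$, to derive $\tilde U_k^*A=\tilde\Sigma_k\tilde V_k^*$ and hence the exact algebraic facts $A^*(\tilde A_k\tilde A_k^*)^\dag=\tilde A_k^\dag$ and $\tilde A_k^\dag A=\tilde V_k\tilde V_k^*$ (both of which I have verified; note the first also shows $\tilde x_k$ coincides with the plain TRSVD solution $\hat x_k=\tilde A_k^\dag b$, a point the paper leaves implicit). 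This collapses the entire signal error to $(I-\tilde V_k\tilde V_k^*)A^*w=(I-\tilde V_k\tilde V_k^*)(A-\tilde A_k)^*w$, bounded by $\|A-\tilde A_k\|\|w\|\le(\sigma_{k+1}+\|A_k-\tilde A_k\|)\|w\|$, which is \emph{sharper} than the stated bound: you avoid Lemma \ref{lem:pseudoinverse2} entirely and incur no $\sigma_1\sigma_k^{-1}$ amplification of the perturbation $\|A_k-\tilde A_k\|$, recovering the theorem's inequality only by the crude enlargement $1\le 8\sigma_1\sigma_k^{-1}$. The noise term is handled identically to the paper (Lemmas \ref{lem:est-AA_k} and \ref{lem:Weyl} giving $4\delta\sigma_k^{-1}$), though your identity would even give the better constant $2\delta\sigma_k^{-1}$. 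The trade-off is that your argument is tied to approximations of the specific projected form $\tilde P_kA$, whereas the paper's perturbation-based argument would survive for a general rank-$k$ approximation $\tilde A_k$ close to $A_k$.
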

\begin{proof}
By the decomposition $b=b^\dag + e$, we have (with $P_k^\perp=I-P_k$)
\begin{equation*}
  \begin{aligned}
    \tilde x_k -x^\dag & = A^*(\tilde A_k\tilde A_k^*)^\dag b- A^*(AA^*)^\dag b^\dag \\
      & = A^*(\tilde A_k\tilde A_k^*)^{\dag}e + A^*[(\tilde A_k\tilde A_k^*)^{\dag}-(A_kA_k^*)^{\dag}]b^\dag - P_k^\perp A^* (AA^*)^{\dag}b^\dag.
  \end{aligned}
\end{equation*}
The source condition $x^\dag = A^*w$ in \eqref{eqn:range-ex} implies
\begin{equation*}
 \tilde x_k -x^\dag = A^*(\tilde A_k\tilde A_k^*)^{\dag}e + A^*[(\tilde A_k\tilde A_k^*)^{\dag}-(A_kA_k^*)^{\dag}]AA^*w - P_k^\perp A^* (AA^*)^{\dag}AA^*w.
\end{equation*}
By the triangle inequality, we have
\begin{equation*}
  \begin{aligned}
  \|\tilde x_k-x^\dag \| & \leq \|A^*(\tilde A_k\tilde A_k^*)^{\dag}e\|+\|A^*[(\tilde A_k\tilde A_k^*)^{\dag}-(A_kA_k^*)^{\dag}]AA^*w\|\\
    &\quad +\|P_k^\perp A^* (AA^*)^{\dag}AA^*w\| :={\rm I}_1 + {\rm I}_2 + {\rm I}_3.
  \end{aligned}
\end{equation*}
It suffices to bound the three terms separately. First, for the term ${\rm I}_1$, by the identity $(\tilde A_k\tilde A_k^*)^\dag = (\tilde
A_k^*)^\dag\tilde A_k^\dag$ and Lemma \ref{lem:est-AA_k}, we have
\begin{equation*}
  {\rm I}_1 \leq \|A^*(\tilde A_k^*)^\dag\|\|\tilde A_k^\dag\|\|e\|\leq 2\tilde\sigma_k^{-1}\delta.
\end{equation*}
Second, for ${\rm I}_2$, by Lemmas \ref{lem:est-AA_k} and
\ref{lem:pseudoinverse2} and the identity $(\tilde A_k\tilde A_k^*)^\dag = (\tilde
A_k^*)^\dag\tilde A_k^\dag$, we have
\begin{equation*}
  \begin{aligned}
  {\rm I}_2 & \leq \|A^*[(\tilde A_k\tilde A_k^*)^{\dag}-(A_kA_k^*)^{\dag}]AA^*\|\|w\|\\
    & \leq \|A^*(\tilde A_k\tilde A_k^*)^\dag(\tilde A_k \tilde A_k^*-A_kA_k^*)(A_kA_k^*)^{\dag}AA^*\| \|w\|\\
     & \leq \|A^*(\tilde A_k^*) ^\dag\|\|\tilde A_k^\dag\|\|\tilde A_k\tilde A_k^*-A_kA_k^*\|\|(A_kA_k^*)^\dag AA^*\|\|w\|\\
     &\leq 4\tilde\sigma_k^{-1}\|A\|\|A_k-\tilde A_k\|\|w\|,
  \end{aligned}
\end{equation*}
since $\|\tilde A_k\tilde A_k^*-A_kA_k^*\|\leq \|\tilde A_k-A_k\|(\|\tilde A_k\|+\|A_k^*\|)\leq 2\|A\|\|\tilde A_k-A_k\|$
and $\|(A_kA_k^*)^\dag AA^*\|\leq 1$. By Lemma \ref{lem:Weyl}, we can bound the term $\|(\tilde A_k^*)^\dag\|$ by
\begin{equation*}
  \|(\tilde A_k^*)^\dag\| = \tilde \sigma_k^{-1}\leq (\sigma_k-\|A-\tilde A_k\|)^{-1}\leq 2\sigma_k^{-1}.
\end{equation*}
Last, we can bound the third term ${\rm I}_3$ directly by
${\rm I}_3 \leq \|P_k^\perp A^*\|\|w\| \leq \sigma_{k+1}\|w\|$.
Combining these estimates yields the desired assertion.\qed
\end{proof}

\begin{remark}
The bound in Theorem \ref{thm:err-trsvd} contains three terms: propagation error $\sigma_k^{-1}\delta$,
approximation error $\sigma_{k+1}\|w\|$, and perturbation error $\sigma_k^{-1}\|A\|\|A-\tilde A_k\|\|w\|$.
It is of the worst-case scenario type and can be pessimistic. In particular, the error
$\|A^*(\tilde A_k\tilde A_k^*)^{-1}e\|$ can be bounded more precisely by
\begin{equation*}
 \|A^*(\tilde A_k\tilde A_k^*)^{\dag}e\| \leq \|A^*(\tilde A_k^*)^\dag\|\|\tilde A_k^\dag e\|,
\end{equation*}
and $\|\tilde A_k^\dag e\|$ can be much smaller than $\tilde\sigma_k^{-1}\|e\|$, if $e$
concentrates in the high-frequency modes. By balancing the terms, it suffices for $\tilde A_k$ to have
an accuracy $O(\delta)$. This is consistent with the analysis for regularized solutions with perturbed operators.
\end{remark}

\begin{remark}
The condition $\|A-\tilde A_k\|<\sigma_k/2$ in Theorem \ref{thm:err-trsvd} requires a sufficiently accurate low-rank
RSVD approximation $(\tilde U_k,\tilde \Sigma_k,\tilde V_k)$ to $A$, i.e., the rank $k$ is sufficiently large. It enables one to
define a TRSVD solution $\tilde x_k$ of truncation level $k$.
\end{remark}

Next we give a relative error estimate for $\tilde x_k$ with respect to the TSVD approximation $x_k$. Such
an estimate was the focus of a few works \cite{BoutsidisMagdon:2014,XiangZou:2013,XiangZou:2015,WeiXieZhang:2016}.
First, we give a bound on $\|\tilde A_k\tilde A_k^*(A_k^*)^\dag-A_k\|$.

\begin{lemma}\label{lem:est-trsvd}
The following error estimate holds
\begin{equation*}
  \|\tilde A_k\tilde A_k^*(A_k^*)^\dag-A_k\| \leq \left(1+\sigma_1\sigma_k^{-1}\right)\|A_k-\tilde A_k\|.
\end{equation*}
\end{lemma}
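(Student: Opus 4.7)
\textbf{Proof proposal for Lemma \ref{lem:est-trsvd}.} The plan is to rewrite the quantity $\tilde A_k\tilde A_k^*(A_k^*)^\dag-A_k$ as a product involving $\tilde A_k\tilde A_k^*-A_kA_k^*$ and then split this commutator-like difference into two pieces that can each be controlled with the basic ingredients $\|(A_k^*)^\dag\|=\sigma_k^{-1}$, $\|A_k\|\le\sigma_1$, and $\|\tilde A_k\|\le\sigma_1$.

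First, I would use the SVD $A_k=U_k\Sigma_kV_k^*$ to compute $(A_k^*)^\dag=U_k\Sigma_k^{-1}V_k^*$ and check the key algebraic identity
\begin{equation*}
  A_kA_k^*(A_k^*)^\dag = U_k\Sigma_k^2U_k^*U_k\Sigma_k^{-1}V_k^* = U_k\Sigma_k V_k^* = A_k.
\end{equation*}
Consequently $\tilde A_k\tilde A_k^*(A_k^*)^\dag-A_k=(\tilde A_k\tilde A_k^*-A_kA_k^*)(A_k^*)^\dag$, and the problem reduces to bounding this product. Along the way I would also record $A_k^*(A_k^*)^\dag=V_kV_k^*$, an orthogonal projection of spectral norm $1$.

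Next I would apply the standard ``add and subtract'' splitting
\begin{equation*}
   \tilde A_k\tilde A_k^*-A_kA_k^* = \tilde A_k(\tilde A_k^*-A_k^*) + (\tilde A_k-A_k)A_k^*,
\end{equation*}
so that
\begin{equation*}
  \tilde A_k\tilde A_k^*(A_k^*)^\dag - A_k = \tilde A_k(\tilde A_k^*-A_k^*)(A_k^*)^\dag + (\tilde A_k-A_k)V_kV_k^*.
\end{equation*}
For the second summand, submultiplicativity with $\|V_kV_k^*\|=1$ yields the bound $\|\tilde A_k-A_k\|$. For the first summand, I would use $\|(A_k^*)^\dag\|=\sigma_k^{-1}$ together with $\|\tilde A_k\|=\|\tilde P_kA\|\le\|A\|=\sigma_1$ (recalling from \eqref{eqn:rsvd} that $\tilde A_k=\tilde P_kA$ with $\tilde P_k$ an orthogonal projection) to obtain $\sigma_1\sigma_k^{-1}\|\tilde A_k-A_k\|$. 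Summing the two contributions gives exactly $(1+\sigma_1\sigma_k^{-1})\|A_k-\tilde A_k\|$.

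There is no real obstacle here: the only substantive choice is the splitting of $\tilde A_k\tilde A_k^*-A_kA_k^*$, and the symmetric alternative $(\tilde A_k-A_k)\tilde A_k^*+A_k(\tilde A_k^*-A_k^*)$ would leave a residual $\|\tilde A_k^*(A_k^*)^\dag\|$ that cannot be bounded by $1$, so the ordering chosen above is essential to pick up the projection $V_kV_k^*$ on the right and exploit $\|\tilde P_kA\|\le\sigma_1$ on the left.
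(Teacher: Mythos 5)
Your proof is correct and follows essentially the same route as the paper: rewrite the quantity as $(\tilde A_k\tilde A_k^*-A_kA_k^*)(A_k^*)^\dag$, split the difference as $\tilde A_k(\tilde A_k^*-A_k^*)+(\tilde A_k-A_k)A_k^*$, and bound the two pieces using $\|(A_k^*)^\dag\|=\sigma_k^{-1}$, $\|A_k^*(A_k^*)^\dag\|=1$, and $\|\tilde A_k\|=\|\tilde P_kA\|\le\sigma_1$. The only difference is that you make explicit the identity $A_kA_k^*(A_k^*)^\dag=A_k$, which the paper uses implicitly in its first equality.
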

\begin{proof}
This estimate follows by direct computation:
\begin{align*}
  \|\tilde A_k\tilde A_k^*(A_k^*)^\dag-A_k\|  & = \|[\tilde A_k\tilde A_k^* - A_kA_k^*](A_k^*)^\dag\|\\
    & \leq \|\tilde A_k(\tilde A_k^* - A_k^*)(A_k^*)^\dag\| + \|(\tilde A_k -A_k)A_k^*(A_k^*)^\dag\|\\
    & \leq \|\tilde A_k\|\|\tilde A_k^*-A_k^*\|\|(A_k^*)^\dag\| + \|\tilde A_k-A_k\|\|A_k^*(A_k^*)^\dag\|\\
    & \leq (\sigma_1\sigma_k^{-1}+1)\|\tilde A_k-A_k\|,
\end{align*}
since $\|\tilde A_k\| = \|\tilde P_kA\|\leq \|A\|=\sigma_1$.
Then the desired assertion follows directly.\qed
\end{proof}

Next we derive a relative error estimate between the approximations $x_k$ and $\tilde x_k$.
\begin{theorem}\label{thm:err-TSVD}
For any $k<r$, and $\|A-\tilde A_k\|<\sigma_{k}/2$, there holds
\begin{equation*}
  \frac{\|x_k-\tilde x_k\|}{\|x_k\|}\leq 4\Big(1+\frac{\sigma_1}{\sigma_k}\Big)\frac{\|A_k-\tilde A_k\|}{\sigma_k}.
\end{equation*}
\end{theorem}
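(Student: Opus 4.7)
The plan hinges on recasting both $x_k$ and $\tilde x_k$ in a symmetric form that allows the sharp bound of Lemma~\ref{lem:est-trsvd} to be invoked directly. The starting point is the identity
\begin{equation*}
x_k = A_k^\dag b = A^*(A_kA_k^*)^\dag b,
\end{equation*}
which holds because $(A - A_k)^*(A_kA_k^*)^\dag = 0$: indeed, the range of $(A_kA_k^*)^\dag$ lies in $\mathcal{R}(U_k)$, which is annihilated by $(A - A_k)^*$. Combined with the defining formula $\tilde x_k = A^*(\tilde A_k\tilde A_k^*)^\dag b$, the error acquires the clean form
\begin{equation*}
\tilde x_k - x_k = A^*\bigl[(\tilde A_k\tilde A_k^*)^\dag - (A_kA_k^*)^\dag\bigr]b.
\end{equation*}

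Next I would apply the pseudoinverse perturbation identity $C^\dag - D^\dag = C^\dag(D - C)D^\dag$ for symmetric positive semidefinite matrices (established inside the proof of Lemma~\ref{lem:pseudoinverse2}), substitute $(A_kA_k^*)^\dag b = (A_k^*)^\dag x_k$ (which follows from $(A_k^*)^\dag A_k^\dag = (A_kA_k^*)^\dag$ together with $A_k^\dag b = x_k$), and simplify via the easily verified identity $A_kA_k^*(A_k^*)^\dag = A_k$ to obtain
\begin{equation*}
\tilde x_k - x_k = -A^*(\tilde A_k\tilde A_k^*)^\dag\bigl[\tilde A_k\tilde A_k^*(A_k^*)^\dag - A_k\bigr]x_k.
\end{equation*}
The final step is to factor $A^*(\tilde A_k\tilde A_k^*)^\dag = A^*(\tilde A_k^*)^\dag\tilde A_k^\dag$ and apply three pre-established bounds: $\|A^*(\tilde A_k^*)^\dag\| \leq 2$ from Lemma~\ref{lem:est-AA_k}, $\|\tilde A_k^\dag\| = \tilde\sigma_k^{-1} \leq 2\sigma_k^{-1}$ via Lemma~\ref{lem:Weyl} under the standing hypothesis $\|A - \tilde A_k\| \leq \sigma_k/2$, and the key estimate $\|\tilde A_k\tilde A_k^*(A_k^*)^\dag - A_k\| \leq (1 + \sigma_1/\sigma_k)\|A_k - \tilde A_k\|$ of Lemma~\ref{lem:est-trsvd}. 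Multiplying the three constants produces the advertised $4(1 + \sigma_1/\sigma_k)/\sigma_k$.

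The main obstacle is algebraic rather than analytic: one must arrange the factorization so that Lemma~\ref{lem:est-trsvd}'s sharp $(1 + \sigma_1/\sigma_k)$ packaging is invoked, rather than the looser route of separately bounding $\|\tilde A_k\tilde A_k^* - A_kA_k^*\| \leq 2\sigma_1\|A_k - \tilde A_k\|$ and then multiplying by $\|(A_k^*)^\dag\| = \sigma_k^{-1}$, which would overcount. The substitution $(A_kA_k^*)^\dag b = (A_k^*)^\dag x_k$ is equally crucial, since it is what routes the final estimate through $\|x_k\|$ rather than $\|b\|$; without it, an extraneous $\|b\|/\|x_k\|$ factor would appear and the bound would fail to be a genuine relative error bound.
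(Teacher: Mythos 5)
Your proposal is correct and follows essentially the same route as the paper's proof: rewrite $x_k = A^*(A_kA_k^*)^\dag b$, apply the pseudoinverse perturbation identity from Lemma \ref{lem:pseudoinverse2}, reroute the right-hand factor through $x_k$ via $(A_kA_k^*)^\dag = (A_k^*)^\dag A_k^\dag$, and then combine Lemmas \ref{lem:est-AA_k}, \ref{lem:Weyl} and \ref{lem:est-trsvd} to get the constant $4(1+\sigma_1/\sigma_k)\sigma_k^{-1}$. The only differences are cosmetic (a sign convention on the error and a more explicit justification of $A^*(A_kA_k^*)^\dag = A_k^*(A_kA_k^*)^\dag$, which the paper asserts without comment).
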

\begin{proof}
We rewrite the TSVD solution $x_k$ as
\begin{equation}\label{eqn:x_k}
  x_k = A^* (A_kA_k^*)^{\dag}b = A_k^*(A_kA_k^*)^\dag b.
\end{equation}
By Lemma \ref{lem:Weyl} and the assumption $\|A-\tilde A_k\|<\sigma_{k}/2$, we have
$\tilde\sigma_{k} >0.$ Then $x_k-\tilde x_k = A^*((A_kA_k^*)^{\dag}-(\tilde A_k\tilde A_k^*)^{\dag})b$.
By Lemma \ref{lem:pseudoinverse2},
\begin{equation*}
   (A_kA_k^*)^\dag-(\tilde A_k\tilde A_k^*)^\dag = (\tilde A_k\tilde A_k^*)^\dag(\tilde A_k\tilde A_k^* - A_kA_k^*)(A_kA_k^*)^\dag.
\end{equation*}
It follows from the identity $(A_kA_k^*)^\dag=(A_k^*)^\dag A_k^\dag$ and \eqref{eqn:x_k} that
\begin{equation*}
  \begin{aligned}
    x_k-\tilde x_k &= A^*(\tilde A_k\tilde A_k^*)^\dag(\tilde A_k\tilde A_k^* - A_kA_k^*)(A_kA_k^*)^\dag b \\
      &= A^*(\tilde A_k\tilde A_k^*)^\dag(\tilde A_k\tilde A_k^*(A_k^*)^\dag - A_k)A_k^*(A_kA_k^*)^\dag b \\
      & = A^*(\tilde A_k^*)^\dag\tilde A_k^\dag(\tilde A_k\tilde A_k^*(A_k^*)^\dag - A_k)x_k.
  \end{aligned}
\end{equation*}
Thus, we obtain
\begin{equation*}
  \frac{\|x_k-\tilde x_k\|}{\|x_k\|}\leq \|A^*(\tilde A_k^*)^\dag\|\|\tilde A_k^\dag\|\|\tilde A_k\tilde A_k^* (A_k^*)^\dag-A_k\|.
\end{equation*}
By Lemma \ref{lem:Weyl}, we bound the term $\|\tilde A_k^\dag\|$ by $\|\tilde A_k^\dag\| \leq 2\sigma_k^{-1}.$
Combining the preceding estimates with Lemmas \ref{lem:est-AA_k} and \ref{lem:est-trsvd} completes the proof.\qed
\end{proof}

\begin{remark}
The relative error is determined by $k$ {\rm(}and
in turn by $\delta$ etc{\rm)}. Due to the presence of the factor $\sigma_k^{-2}$,
the estimate requires a highly accurate low-rank approximation, i.e., $\|A_k-\tilde A_k\|\ll \sigma_k(A)^{-2}$, and hence
it is more pessimistic than Theorem \ref{thm:err-trsvd}.
The estimate is comparable with the perturbation estimate
for the TSVD 
\begin{equation*}
  \frac{\|x_k-\bar x_k\|}{\|x_k\|}\leq \frac{\sigma_1\|A_k-\tilde A_k\|}{\sigma_k-\|A_k-\tilde A_k\|}\left(\frac{1}{\sigma_1}+\frac{\|Ax_k-b\|}{\sigma_k\|b\|}\right)+\frac{\|A_k-\tilde A_k\|}{\sigma_k}.
\end{equation*}
Modulo the $\alpha$ factor, the estimates in \cite{XiangZou:2013,WeiXieZhang:2016} for Tikhonov regularization
also depend on $\sigma_k^{-2}$ {\rm(}but can be much milder for a large $\alpha${\rm)}.
\end{remark}

\subsection{Tikhonov regularization}
The following bounds are useful for deriving error estimate on $\tilde x_\alpha$ in \eqref{eqn:t-tikh_x}.
\begin{lemma}\label{lem:tikh}
The following estimates hold
\begin{equation*}
  \begin{aligned}
    \|(AA^*+\alpha I)(\tilde A_k\tilde A_k^*+\alpha I)^{-1}-I\| &\leq  2\alpha^{-1}\|A\|\|A-\tilde A_k\|,\\
    \|[(AA^*+\alpha I)(\tilde A_k\tilde A_k^*+\alpha I)^{-1}-I]AA^*\|&\leq 2\|A\|(2\alpha^{-1}\|A\|\|A-\tilde A_k\|+1)\|A-\tilde A_k\|.
  \end{aligned}
\end{equation*}
\end{lemma}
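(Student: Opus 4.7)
The plan is to reduce both inequalities to a single matrix perturbation identity plus the elementary bounds $\|(\tilde A_k\tilde A_k^*+\alpha I)^{-1}\|\leq\alpha^{-1}$ and $\|(\tilde A_k\tilde A_k^*+\alpha I)^{-1}\tilde A_k\tilde A_k^*\|\leq 1$, both of which follow from positive semidefiniteness of $\tilde A_k\tilde A_k^*$ (equivalently $t/(t+\alpha)\leq 1$ for $t\geq 0$). The key preliminary perturbation bound is
\begin{equation*}
\|AA^*-\tilde A_k\tilde A_k^*\|\leq \|(A-\tilde A_k)A^*\|+\|\tilde A_k(A-\tilde A_k)^*\|\leq 2\|A\|\|A-\tilde A_k\|,
\end{equation*}
where I use $\|\tilde A_k\|=\|\tilde P_k A\|\leq\|A\|$, as already observed in the proof of Lemma \ref{lem:est-trsvd}.

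For the first inequality I would use the resolvent-style identity
\begin{equation*}
(AA^*+\alpha I)(\tilde A_k\tilde A_k^*+\alpha I)^{-1}-I = (AA^*-\tilde A_k\tilde A_k^*)(\tilde A_k\tilde A_k^*+\alpha I)^{-1},
\end{equation*}
obtained by writing $AA^*+\alpha I=(AA^*-\tilde A_k\tilde A_k^*)+(\tilde A_k\tilde A_k^*+\alpha I)$. Taking spectral norms and applying $\|(\tilde A_k\tilde A_k^*+\alpha I)^{-1}\|\leq\alpha^{-1}$ together with the perturbation bound above gives the stated estimate immediately.

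For the second inequality I would multiply the same identity on the right by $AA^*$ and then isolate $AA^*$ next to the resolvent. The main manoeuvre is the splitting
\begin{equation*}
(\tilde A_k\tilde A_k^*+\alpha I)^{-1}AA^* = (\tilde A_k\tilde A_k^*+\alpha I)^{-1}(AA^*-\tilde A_k\tilde A_k^*) + (\tilde A_k\tilde A_k^*+\alpha I)^{-1}\tilde A_k\tilde A_k^*,
\end{equation*}
which, together with $\|(\tilde A_k\tilde A_k^*+\alpha I)^{-1}\tilde A_k\tilde A_k^*\|\leq 1$ and the perturbation bound, yields $\|(\tilde A_k\tilde A_k^*+\alpha I)^{-1}AA^*\|\leq 2\alpha^{-1}\|A\|\|A-\tilde A_k\|+1$. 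Multiplying by the leading factor $\|AA^*-\tilde A_k\tilde A_k^*\|\leq 2\|A\|\|A-\tilde A_k\|$ delivers the claim.

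No step is genuinely hard, so the proof is essentially a bookkeeping exercise. The one point to be careful about is not to bound $\|(\tilde A_k\tilde A_k^*+\alpha I)^{-1}AA^*\|$ crudely by $\alpha^{-1}\|A\|^2$ in the second estimate: doing so would spoil the structure of the final bound and lose the additional factor of $\|A-\tilde A_k\|$ appearing in the leading term. Using the add-and-subtract trick above is precisely what recovers the correct form stated in the lemma.
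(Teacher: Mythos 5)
Your proof is correct, and its overall architecture coincides with the paper's: both rest on the resolvent identity $(AA^*+\alpha I)(\tilde A_k\tilde A_k^*+\alpha I)^{-1}-I=(AA^*-\tilde A_k\tilde A_k^*)(\tilde A_k\tilde A_k^*+\alpha I)^{-1}$, the perturbation bound $\|AA^*-\tilde A_k\tilde A_k^*\|\leq 2\|A\|\|A-\tilde A_k\|$ via $\|\tilde A_k\|=\|\tilde P_kA\|\leq\|A\|$, and the elementary resolvent bounds. The only substantive difference is in the second estimate, where you must control the middle factor $(\tilde A_k\tilde A_k^*+\alpha I)^{-1}AA^*$: the paper splits $AA^*=(AA^*+\alpha I)-\alpha I$ and applies the triangle inequality to both resulting terms, which (using the first estimate plus $\alpha\|(\tilde A_k\tilde A_k^*+\alpha I)^{-1}\|\leq 1$) actually yields the slightly weaker constant $2\|A\|(2\alpha^{-1}\|A\|\|A-\tilde A_k\|+2)\|A-\tilde A_k\|$; you instead split $AA^*=(AA^*-\tilde A_k\tilde A_k^*)+\tilde A_k\tilde A_k^*$ and use $\|(\tilde A_k\tilde A_k^*+\alpha I)^{-1}\tilde A_k\tilde A_k^*\|\leq 1$, which reproduces the stated constant $(\cdots+1)$ exactly. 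So your add-and-subtract choice is in fact the one that matches the lemma as written; the discrepancy is only in an absolute constant and is immaterial for the subsequent error analysis, but your bookkeeping is the tighter of the two.
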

\begin{proof}
It follows from the identity
\begin{equation*}
(AA^*+\alpha I)(\tilde A_k\tilde A_k^*+\alpha I)^{-1}-I
= (AA^*-\tilde A_k\tilde A_k^*)(\tilde A_k\tilde A_k^*+\alpha I)^{-1}
\end{equation*}
and the inequality $\|\tilde A_k\|=\|\tilde P_kA\|\leq \|A\|$ that
\begin{align*}
  \|(AA^*+\alpha I)(\tilde A_k\tilde A_k^*+\alpha I)^{-1}-I\|&\leq \alpha^{-1}(\|A\|+\|\tilde A_k\|)\|A-\tilde A_k\|\\
   &\leq 2\alpha^{-1}\|A\|\|A-\tilde A_k\|.
\end{align*}
Next, by the triangle inequality,
\begin{align*}
  &\|[(AA^*+\alpha I)(\tilde A_k\tilde A_k^*+\alpha I)^{-1}-I]AA^*\|\\
  \leq&\|AA^*-\tilde A_k\tilde A_k^*\|(\|(\tilde A_k\tilde A_k^*+\alpha I)^{-1}(AA^*+\alpha I)\|+\alpha\|(\tilde A_k\tilde A_k^*+\alpha I)^{-1}\|).
\end{align*}
This, together with the identity $AA^*-\tilde A_k\tilde A_k^*=A(A^*-\tilde A_k^*)+(A-\tilde A_k)A_k^*$
and the first estimate, yields the second estimate, completing the proof of the lemma.\qed
\end{proof}

Now we can give an error estimate on $\tilde x_\alpha$ in \eqref{eqn:t-tikh_x} under condition \eqref{eqn:range-ex}.
\begin{theorem}\label{thm:err-Tikh}
If condition \eqref{eqn:range-ex} holds, then the estimate $\tilde x_\alpha$ satisfies
\begin{equation*}
  \|\tilde x_\alpha - x^\dag\|  \leq \alpha^{-\frac32}\|A\|\|A-\tilde A_k\|\big(\delta+(2\alpha^{-1}\|A\|\|A-\tilde A_k\|+1)\alpha\|w\|\big) +2^{-1}\alpha^{\frac12} \|w\|.
\end{equation*}
\end{theorem}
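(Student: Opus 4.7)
The plan is to compare $\tilde x_\alpha$ against a natural reference, namely the exact Tikhonov regularized solution $x_\alpha := A^*(AA^*+\alpha I)^{-1}b$, splitting $\tilde x_\alpha - x^\dag = (\tilde x_\alpha - x_\alpha) + (x_\alpha - x^\dag)$ and bounding the two pieces independently by the triangle inequality.

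For the first piece the key step is the algebraic identity
\begin{equation*}
  A^*\tilde R_\alpha - A^*R_\alpha \;=\; A^*R_\alpha\bigl[(AA^*+\alpha I)\tilde R_\alpha - I\bigr],
\end{equation*}
where $R_\alpha := (AA^*+\alpha I)^{-1}$ and $\tilde R_\alpha := (\tilde A_k\tilde A_k^*+\alpha I)^{-1}$. This factoring is designed to expose the classical Tikhonov filter factor $A^*R_\alpha$, whose spectral norm satisfies the standard bound $\|A^*R_\alpha\|\leq \tfrac{1}{2\sqrt{\alpha}}$ (via the filter function $\sigma\mapsto\sigma/(\sigma^2+\alpha)$), while the remaining bracketed operator $(AA^*+\alpha I)\tilde R_\alpha - I$ is precisely what Lemma~\ref{lem:tikh} controls. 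I would substitute $b = AA^*w + e$ using the source condition, then split the resulting expression into a signal contribution (acting on $AA^*w$) and a noise contribution (acting on $e$). Applying respectively the second and first estimates of Lemma~\ref{lem:tikh} gives, after elementary arithmetic, $\alpha^{-1/2}\|A\|(2\alpha^{-1}\|A\|\|A-\tilde A_k\|+1)\|A-\tilde A_k\|\|w\|$ from the signal part and $\alpha^{-3/2}\|A\|\|A-\tilde A_k\|\delta$ from the noise part. Factoring out $\alpha^{-3/2}\|A\|\|A-\tilde A_k\|$ produces the bracketed expression in the claimed bound.

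For the second piece, the standard source-condition computation yields $x_\alpha - x^\dag = -\alpha A^*R_\alpha w$ (interpreting $x_\alpha$ as the Tikhonov output from the exact data $b^\dag = AA^*w$), and spectral calculus on the filter function $\alpha\sigma/(\sigma^2+\alpha)\leq \tfrac{1}{2}\sqrt{\alpha}$ delivers $\|x_\alpha - x^\dag\|\leq \tfrac{1}{2}\sqrt{\alpha}\|w\|$, which is the last term in the statement. The main obstacle is essentially bookkeeping: correctly matching each of the two estimates in Lemma~\ref{lem:tikh} to the appropriate half of the signal/noise split, and verifying that the various powers of $\alpha$ combine exactly into the claimed form. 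Beyond Lemma~\ref{lem:tikh} and the classical filter bound $\|A^*R_\alpha\|\leq \tfrac{1}{2\sqrt{\alpha}}$, no further ingredient is needed.
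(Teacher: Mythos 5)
Your overall strategy is sound and, despite the different packaging, lands on essentially the same decomposition as the paper: your operator $(AA^*+\alpha I)\tilde R_\alpha - I$ is exactly the paper's $\tilde I - I$, your use of the two estimates of Lemma~\ref{lem:tikh} on the signal and noise parts mirrors the paper's, and your filter bound $\|A^*R_\alpha\|\le \tfrac12\alpha^{-1/2}$ plays precisely the role that the paper obtains by multiplying by $(A^*A+\alpha I)$, testing against $\tilde x_\alpha-x^\dag$, and invoking Young's inequality. The arithmetic for your first piece is correct and reproduces the first bracketed term of the claimed bound exactly.

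The gap is in the second piece, and it comes from an inconsistency in your choice of reference solution. You define $x_\alpha := A^*R_\alpha b$ with the \emph{noisy} data $b$ (and indeed substitute $b=AA^*w+e$ when treating the first piece), but you then evaluate $x_\alpha-x^\dag$ as if $x_\alpha$ were computed from the exact data $b^\dag$. With your definition,
\begin{equation*}
  x_\alpha - x^\dag \;=\; A^*R_\alpha e \;-\; \alpha A^*R_\alpha w,
\end{equation*}
so the second piece carries an extra noise-propagation term $\|A^*R_\alpha e\|\le \tfrac12\alpha^{-1/2}\delta$ that is absent from your accounting and is \emph{not} dominated by $\alpha^{-3/2}\|A\|\|A-\tilde A_k\|\delta$ when $\tilde A_k$ is accurate. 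Switching the reference to the exact-data Tikhonov solution does not remove it: the noise then enters the first piece through $A^*\tilde R_\alpha e = A^*R_\alpha \tilde I e$ with $\|\tilde I\|\le 1+2\alpha^{-1}\|A\|\|A-\tilde A_k\|$, and the same $\tfrac12\alpha^{-1/2}\delta$ reappears. For what it is worth, the paper's own proof has the identical blemish --- it bounds $\|\tilde I e\|$ by $2\alpha^{-1}\|A\|\|A-\tilde A_k\|\delta$, i.e., by $\|\tilde I-I\|\,\delta$ rather than $\|\tilde I\|\,\delta$ --- so the inequality as stated is only reached after this term is silently dropped. To make your argument self-consistent you should either carry the additional $\tfrac12\alpha^{-1/2}\delta$ on the right-hand side or note explicitly that you recover the stated bound modulo this standard noise-propagation term.
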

\begin{proof}
First, with condition \eqref{eqn:range-ex}, $x^\dag$ can be rewritten as
\begin{equation*}
  \begin{aligned}
  x^\dag & = (A^*A+\alpha I)^{-1}(A^*A+\alpha I)x^\dag =(A^*A+\alpha I)^{-1}(A^* b^\dag + \alpha x^\dag)\\
   & = (A^*A+\alpha I)^{-1}A^*(b^\dag + \alpha w).
  \end{aligned}
\end{equation*}
The identity \eqref{eqn:matrix-id} implies $x^\dag = A^*(AA^*+\alpha I)^{-1}(b^\dag +\alpha w)$.
Consequently,
\begin{align*}
    &\quad\tilde x_\alpha - x^\dag = A^*[(\tilde A_k\tilde A_k^*+\alpha I)^{-1}b-(AA^*+\alpha I)^{-1}(b^\dag +\alpha w)]\\
      & = A^*[(\tilde A_k\tilde A_k^*+\alpha I)^{-1}e + ((\tilde A_k \tilde A_k^*+\alpha I)^{-1}-(AA^*+\alpha I)^{-1})b^\dag - \alpha (AA^*+\alpha I)^{-1}w].
\end{align*}
Let $\tilde I =(AA^*+\alpha I)(\tilde A_k\tilde A_k^*+\alpha I)^{-1}$. Then
by the identity \eqref{eqn:matrix-id}, there holds
\begin{equation*}
  \begin{aligned}
    (A^*A+\alpha I)(\tilde x_\alpha -x^\dag)=A^*[\tilde Ie+(\tilde I-I)b^\dag - \alpha w],
  \end{aligned}
\end{equation*}
and taking inner product with $\tilde x_\alpha-x^\dag$ yields
\begin{equation*}
  \|A(\tilde x_\alpha-x^\dag)\|^2 +\alpha\|\tilde x_\alpha -x^\dag\|^2 \leq \big(\|\tilde Ie\|+\|(\tilde I-I)b^\dag\| + \alpha\|w\|\big)\|A(\tilde x_\alpha -x^\dag)\|.
\end{equation*}
By Young's inequality $ab\leq \frac{1}{4}a^2+b^2$ for any $a,b\in\mathbb{R}$, we deduce
\begin{equation*}
  \alpha^{\frac12} \|\tilde x_\alpha -x^\dag\| \leq 2^{-1}(\|\tilde Ie\|+\|(\tilde I-I)b^\dag\| + \alpha\|w\|).
\end{equation*}
By Lemma \ref{lem:tikh} and the identity $ b^\dag=AA^*w$, we have
\begin{align*}
  \|\tilde Ie\| & \leq 2\alpha^{-1}\|A\|\|A-\tilde A_k\|\delta,\\
  \|(\tilde I-I)b^\dag\| &= \|(\tilde I-I)AA^*w\| \\
    &\leq 2\|A\|(2\alpha^{-1}\|A\|\|A-\tilde A_k\|+1)\|A-\tilde A_k\|\|w\|.
\end{align*}
Combining the preceding estimates yield the desired assertion.\qed
\end{proof}
\begin{remark}
To maintain the error $\|\tilde x_\alpha-x^\dag\|$, the accuracy
of $\tilde A_k$ should be of $O(\delta)$, and $\alpha$ should be of $O(\delta)$,
which gives an overall accuracy $O(\delta^{1/2})$. The tolerance on $\|A-\tilde A_k\|$
can be relaxed for high noise levels. It is consistent with existing theory for Tikhonov
regularization with noisy operators \cite{MaassRieder:1997,Neubauer:1988,Tautenhahn:2008}.
\end{remark}

\begin{remark}
The following relative error estimate was shown \cite[Theorem 1]{XiangZou:2013}:
\begin{equation*}
  \frac{\|x_\alpha-\hat x_\alpha\|}{\|x_\alpha\|}\leq c(2\sec\theta\kappa+\tan\theta\kappa^2)\sigma_{k+1}+O(\sigma_{k+1}^2),
\end{equation*}
with $\theta=\sin^{-1}\frac{(\|b-Ax_\alpha\|^2+\alpha\|x_\alpha\|^2)^{\frac{1}{2}}}{\|b\|}$ and $\kappa=
(\sigma_1^2+\alpha)(\frac{\alpha^\frac12}{\sigma_n^2+\alpha}+\max_{1\leq i\leq n}\frac{\sigma_i}{\sigma_i^2
+\alpha})$. $\kappa$ is a variant of condition number. Thus, $\tilde A_k$ should approximate accurately
$A$ in order not to spoil the accuracy,
and the estimate can be pessimistic for small $\alpha$ for which the estimate tends to blow up.
\end{remark}
\subsection{General Tikhonov regularization}
Last, we give an error estimate for $\tilde x_\alpha$ defined in \eqref{eqn:t-tikh_x-gen} under the following condition
\begin{equation}\label{eqn:range-ex-gen}
  x^\dag = \Gamma A^*w,
\end{equation}
where $\mathcal{N}(L)=\{0\}$, and $\Gamma=L^\dag L^{*\dag}$. Also recall that $B=A\Gamma^\dag$.
\begin{theorem}\label{thm:err-gtikh}
If Condition \eqref{eqn:range-ex-gen} holds, then the regularized solution $\tilde x_\alpha$ in \eqref{eqn:t-tikh_x-gen} satisfies
\begin{equation*}
   \|L(x^\dag -\tilde x_\alpha)\|  \leq \alpha^{-\frac32}\|B\|\|B-\tilde B_k\|\big(\delta+(2\alpha^{-1}\|B\|\|B-\tilde B_k\|+1)\alpha\|w\|\big) + 2^{-1}\alpha^\frac{1}{2} \|w\|.
\end{equation*}
\end{theorem}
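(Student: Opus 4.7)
The plan is to reduce the proof to the standard Tikhonov case (Theorem \ref{thm:err-Tikh}) by working in the transformed variable $z = Lx$, and then mimic the inner-product / Young's inequality argument used there, now applied to the operator $B = AL^\dag$.

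First, I would make the transformation precise. Since $\mathcal{N}(L) = \{0\}$, $L$ has full column rank, so $L^\dag L = I_m$ while $LL^\dag$ is the orthogonal projector onto $\mathcal{R}(L)$. Writing $\Gamma A^* = L^\dag L^{*\dag} A^* = L^\dag B^*$ and noting that $\mathcal{R}(B^*) = \mathcal{R}(L^{*\dag} A^*) \subseteq \mathcal{R}(L^{*\dag}) = \mathcal{R}(L)$, I would conclude that $LL^\dag B^* = B^*$, and hence from \eqref{eqn:t-tikh_x-gen} and the source condition \eqref{eqn:range-ex-gen},
\begin{equation*}
L \tilde x_\alpha = B^*(\tilde B_k \tilde B_k^* + \alpha I)^{-1} b, \qquad Lx^\dag = B^* w.
\end{equation*}
Moreover, $b^\dag = A x^\dag = A\Gamma A^* w = (AL^\dag)(L^{*\dag}A^*) w = BB^* w$, which is precisely the standard-form source-condition identity needed later.

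Next, following the argument in Theorem \ref{thm:err-Tikh} with $A$ replaced by $B$, I would use the matrix identity \eqref{eqn:matrix-id} to rewrite
\begin{equation*}
Lx^\dag = B^*(BB^* + \alpha I)^{-1}(b^\dag + \alpha w),
\end{equation*}
and then subtract to obtain
\begin{equation*}
L(\tilde x_\alpha - x^\dag) = B^*(\tilde B_k \tilde B_k^* + \alpha I)^{-1} e + B^*\bigl[(\tilde B_k \tilde B_k^* + \alpha I)^{-1} - (BB^* + \alpha I)^{-1}\bigr] b^\dag - \alpha B^*(BB^* + \alpha I)^{-1} w.
\end{equation*}
Setting $\tilde I = (BB^* + \alpha I)(\tilde B_k \tilde B_k^* + \alpha I)^{-1}$ and multiplying through by $(B^*B + \alpha I)$ (again using \eqref{eqn:matrix-id}), this collapses to
\begin{equation*}
(B^*B + \alpha I) L(\tilde x_\alpha - x^\dag) = B^*\bigl[\tilde I e + (\tilde I - I)b^\dag - \alpha w\bigr].
\end{equation*}

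Taking the inner product of the preceding identity with $L(\tilde x_\alpha - x^\dag)$ and applying Young's inequality $ab \leq a^2/4 + b^2$ exactly as in Theorem \ref{thm:err-Tikh} yields
\begin{equation*}
\alpha^{1/2} \|L(\tilde x_\alpha - x^\dag)\| \leq \tfrac{1}{2}\bigl(\|\tilde I e\| + \|(\tilde I - I)b^\dag\| + \alpha\|w\|\bigr).
\end{equation*}
Finally, I would invoke Lemma \ref{lem:tikh} with $A$ replaced by $B$ to bound $\|\tilde I e\| \leq 2\alpha^{-1}\|B\|\|B - \tilde B_k\|\delta$, and, using $b^\dag = BB^*w$, to bound $\|(\tilde I - I) b^\dag\| \leq 2\|B\|(2\alpha^{-1}\|B\|\|B-\tilde B_k\| + 1)\|B - \tilde B_k\|\|w\|$; substituting these into the previous inequality gives the stated bound. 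The main subtlety is really only the first step—confirming that the transformation $z = Lx$ legitimately reduces the general-form problem to standard form with operator $B$ under the hypothesis $\mathcal{N}(L) = \{0\}$—after which Lemma \ref{lem:tikh} and the Young's-inequality trick do all the remaining work verbatim.
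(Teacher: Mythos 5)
Your proposal is correct and follows essentially the same route as the paper's proof: rewrite $x^\dag$ via the source condition and the identity \eqref{eqn:matrix-id}, subtract, apply the normal-equation operator, test against the error, and finish with Young's inequality and Lemma \ref{lem:tikh} applied to $B$. The only cosmetic difference is that you phrase everything in the transformed variable $Lx$ and multiply by $(B^*B+\alpha I)$, while the paper multiplies by $(A^*A+\alpha L^*L)$ and tests with $\tilde x_\alpha-x^\dag$; since $L^*(B^*B+\alpha I)L=A^*A+\alpha L^*L$ and $BL=A$, these are the same computation.
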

\begin{proof}
First, by the source condition \eqref{eqn:range-ex-gen}, we rewrite $x^\dag$ as
\begin{equation*}
  \begin{aligned}
  x^\dag & = (A^*A+\alpha L^*L)^{-1}(A^*A+\alpha L^*L)x^\dag\\
   & = (A^*A+\alpha L^*L)^{-1}(A^*b^\dag + \alpha A^*w).
  \end{aligned}
\end{equation*}
Now with the identity $(A^*A+\alpha L^*L)^{-1}A^* = \Gamma A^*(A\Gamma A^*+\alpha I)^{-1},$ we have
\begin{equation*}
  x^\dag = \Gamma A^*(A\Gamma A^*+\alpha I)^{-1}(b^\dag +\alpha w).
\end{equation*}
Thus, upon recalling $B=AL^\dag$, we have
\begin{equation*}
  \begin{aligned}
    &\quad \tilde x_\alpha - x^\dag  = \Gamma A^*[(\tilde B_k\tilde B_k^*+\alpha I)^{-1}b-(BB^*+\alpha I)^{-1}(b^\dag +\alpha w)]\\
      & = \Gamma A^*[(\tilde B_k\tilde B_k^*+\alpha I)^{-1}e + ((\tilde B_k\tilde B_k^*+\alpha I)^{-1}-(BB^*+\alpha I)^{-1})b^\dag - \alpha (BB^*+\alpha I)^{-1}w].
  \end{aligned}
\end{equation*}
It follows from the identity
\begin{equation*}
(A^*A+\alpha L^*L)\Gamma A^* = (A^*A+\alpha L^*L)L^{\dag}L^{\dag*}A^* = A^*(BB^*+\alpha I),
\end{equation*}
that
\begin{equation*}
    (A^*A+\alpha L^*L)(\tilde x_\alpha -x^\dag)= A^*[\tilde Ie+(\tilde I-I)b^\dag - \alpha w],
\end{equation*}
with $\tilde I =(BB^*+\alpha I)(\tilde B_k\tilde B_k^*+\alpha I)^{-1}$.
Taking inner product with $x_\alpha-x^\dag$ and applying Cauchy-Schwarz inequality yield
\begin{equation*}
  \|A(\tilde x_\alpha-x^\dag)\|^2 +\alpha\|L(\tilde x_\alpha -x^\dag)\|^2 \leq (\|\tilde Ie\|+\|(\tilde I-I)b^\dag\| + \|\alpha w\|)\|A(\tilde x_\alpha -x^\dag)\|,
\end{equation*}
Young's inequality implies
$\alpha^\frac{1}{2} \|L(\tilde x_\alpha -x^\dag)\| \leq 2^{-1}(\|\tilde Ie\|+\|(\tilde I-I)b^\dag\| + \alpha \|w\|)$.
The identity $b^\dag = Ax^\dag = AL^{\dag}L^{\dag*}A^*w=BB^*w$ from
\eqref{eqn:range-ex-gen} and Lemma \ref{lem:tikh} complete the proof.\qed
\end{proof}

\section{Numerical experiments and discussions}\label{sec:numer}

Now we present numerical experiments to illustrate our approach. The noisy
data $b$ is generated from the exact data $b^\dag$ as follows
\begin{equation*}
  b_i = b_i^\dag + \delta \max_{j}(|b_j^\dag|)\xi_i,\quad i =1,\ldots,n,
\end{equation*}
where $\delta$ is the relative noise level, and the random variables $\xi_i$s follow the standard Gaussian
distribution. All the computations were carried out on a personal laptop with 2.50 GHz CPU and 8.00G RAM
by \texttt{MATLAB} 2015b. When implementing Algorithm \ref{alg:rsvd}, the default choices $p=5$ and $q=0$
are adopted. Since the TSVD and Tikhonov solutions are close for suitably chosen regularization
parameters, we present only results for Tikhonov regularization (and the general case
with $L$ given by the first-order difference, which has a one-dimensional
kernel $\mathcal{N}(L)$). 

Throughout, the regularization parameter $\alpha$ is determined by uniformly sampling an interval on a logarithmic
scale, and then taking the value attaining the smallest reconstruction error, where approximate Tikhonov minimizers are found
by either \eqref{eqn:t-tikh_x} or \eqref{eqn:t-tikh_x-gen2} with a large $k$ ($k=100$ in all the experiments).

\subsection{One-dimensional benchmark inverse problems}
First, we illustrate the efficiency and accuracy of proposed approach, and compare it with existing
approaches \cite{XiangZou:2013,XiangZou:2015}. We consider seven examples (i.e., \texttt{deriv2}, \texttt{heat},
\texttt{phillips}, \texttt{baart}, \texttt{foxgood}, \texttt{gravity} and \texttt{shaw}),
taken from the popular public-domain \texttt{MATLAB} package \textbf{regutools} (available from
\url{http://www.imm.dtu.dk/~pcha/Regutools/}, last accessed on January 8, 2019), which have been used
in existing studies (see, e.g., \cite{WeiXieZhang:2016,XiangZou:2013,XiangZou:2015}). They are Fredholm integral
equations of the first kind, with the first three examples being mildly ill-posed (i.e., $\sigma_i$s decay algebraically)
and the rest severely ill-posed (i.e., $\sigma_i$s decay exponentially). Unless otherwise stated, the
examples are discretized with a dimension $n=m=5000$. The resulting matrices are dense and unstructured.
The rank $k$ of $\tilde A_k$ is fixed at $k=20$, which is sufficient to for all examples.

The numerical results by standard Tikhonov regularization and two randomized variants, i.e.,
\eqref{eqn:t-tikh_x-xz} and \eqref{eqn:t-tikh_x}, for the examples
are presented in Table \ref{tab:std}. The accuracy of the approximations, i.e., the
Tikhonov solution $x_\alpha$, and two randomized approximations $\hat x_\alpha$ (cf. \eqref{eqn:t-tikh_x-xz},
proposed in \cite{XiangZou:2013}) and $\tilde x_\alpha$ (cf. \eqref{eqn:t-tikh_x}, the proposed in this work), is measured in two different ways:
\begin{align*}
  \tilde e_{xz} & = \|\hat x_\alpha - x_\alpha\|,\quad \tilde e_{ij} = \|\tilde x_\alpha- x_\alpha\|,\\
   e & = \| x_\alpha-x^\dag\|,\quad e_{xz} = \|\hat x_\alpha- x^\dag\|, \quad e_{ij} = \|\tilde x_\alpha-x^\dag\|,
\end{align*}
where the methods are indicated by the subscripts.
That is, $\tilde e_{xz}$ and $\tilde e_{ij}$ measure the accuracy with respect to the Tikhonov solution
$x_\alpha$, and $e$, $e_{xz}$
and $e_{ij}$ measure the accuracy with respect to the exact one $x^\dag$.

The following observations can be drawn from Table \ref{tab:std}. For all examples, the three approximations
$x_\alpha$, $\tilde x_\alpha$ and $\hat x_\alpha$ have comparable accuracy relative to the exact solution $x^\dag$,
and the errors $e_{ij}$ and $e_{xz}$ are fairly close to the error $e$ of the Tikhonov solution $x_\alpha$. Thus,
RSVD can maintain the reconstruction accuracy. For
\texttt{heat}, despite the apparent large magnitude of the errors $\tilde e_{xz}$ and $\tilde e_{ij}$, the errors
$e_{xz}$ and $e_{ij}$ are not much worse than $e$. A close inspection shows that the difference of the
reconstructions are mostly in the tail part, which requires more
modes for a full resolution. The computing time (in seconds)
for obtaining $x_\alpha$ and $\tilde x_\alpha$ and $\hat x_\alpha$ is about $6.60$, $0.220$ and $0.220$, where
for the latter two, it includes also the time for computing RSVD. Thus, for all the examples, with a rank $k=20$,
RSVD can accelerate standard Tikhonov regularization by a factor of 30, while maintaining the
accuracy, and the proposed approach is competitive with the
one in \cite{XiangZou:2013}. Note that the choice $k=20$ can be
greatly reduced for severely ill-posed problems; see Section \ref{ssec:conv} below for discussions.

\begin{table}[hbt!]
  \centering
  \caption{Numerical results by standard Tikhonov regularization at two noise levels.\label{tab:std}}
  \begin{tabular}{lllllll}
  \hline
  example &    $\delta$   &   $\tilde e_{xz}$ & $\tilde e_{ij}$ & $e$ & $e_{xz}$ & $e_{ij}$ \\
  \hline
  \texttt{baart}   & $1\%$ & 1.14e-9 &  1.14e-9  & 1.68e-1 &  1.68e-1 &  1.68e-1\\
                   & $5\%$ & 5.51e-11&  6.32e-11 & 2.11e-1 &  2.11e-1 &  2.11e-1\\
  \hline
  \texttt{deriv2}  & $1\%$ & 2.19e-2 &  2.41e-2  & 1.18e-1 &  1.20e-1 &  1.13e-1\\
                   & $5\%$ & 1.88e-2 &  2.38e-2  & 1.59e-1 &  1.60e-1 &  1.62e-1\\
  \hline
  \texttt{foxgood} & $1\%$ & 2.78e-7 &  2.79e-7  & 4.93e-1 &  4.93e-1 &  4.93e-1\\
                   & $5\%$ & 1.91e-7 &  1.96e-7  & 1.18e0  &  1.18e0  &  1.18e0\\
  \hline
  \texttt{gravity} & $1\%$ & 1.38e-4 &  1.41e-4  & 7.86e-1 &  7.86e-1 &  7.86e-1\\
                   & $5\%$ & 1.83e-4 &  1.84e-4  & 2.63e0  &  2.63e0  &  2.63e0\\
  \hline
  \texttt{heat}    & $1\%$ & 1.33e0  &  1.13e0   & 9.56e-1 &  1.67e0  &  1.50e0\\
                   & $5\%$ & 9.41e-1 &  9.45e-1  & 2.02e0  &  1.70e0  &  1.99e0\\
  \hline
  \texttt{phillips}& $1\%$ & 5.53e-3 &  4.09e-3  & 6.28e-2 &  6.19e-2 &  6.24e-2\\
                   & $5\%$ & 6.89e-3 &  7.53e-3  & 9.57e-2 &  9.53e-2 &  9.79e-2\\
  \hline
  \texttt{shaw}    & $1\%$ & 3.51e-9 &  3.49e-9  & 4.36e0  &  4.36e0  &  4.36e0\\
                   & $5\%$ & 1.34e-9 &  1.37e-9  & 8.23e0  &  8.23e0  &  8.23e0\\
  \hline
  \end{tabular}
\end{table}

The preceding observations remain largely valid for general Tikhonov regularization; see
Table \ref{tab:h1}. Since the construction of the approximation $\hat x_\alpha$ does not retain the
structure of the regularized solution $x_\alpha$, the error $\tilde e_{xz}$ can potentially be much larger
than $\tilde e_{ij}$, which can indeed be observed. The errors $e$, $e_{xz}$ and $e_{ij}$
are mostly comparable, except for \texttt{deriv2}. For \texttt{deriv2}, the approximation
$\hat x_\alpha$ suffers from grave errors, since the projection of $L$ into $\mathcal{R}
(Q)$ is very inaccurate for preserving $L$. It is expected that the loss occurs whenever general Tikhonov
penalty is much more effective than the standard one. This shows the importance of structure
preservation. Note that, for a general $L$, $\tilde x_\alpha$ takes only about 1.5 times the
computing time of $\hat x_\alpha$. This
cost can be further reduced since $L$ is highly structured and admits fast inversion. Thus
preserving the range structure of $x_\alpha$ in \eqref{eqn:range} does not
incur much overhead.

\begin{table}[hbt!]
  \centering
  \caption{Numerical results by general Tikhonov regularization (with the first-order derivative penalty) for the examples
  at two noise levels.\label{tab:h1}}
  \begin{tabular}{lllllll}
  \hline
  example &         $\delta$ & $\tilde e_{xz}$ & $\tilde e_{ij}$ & $e$ & $e_{xz}$ & $e_{ij}$ \\
  \hline
  \texttt{baart}   & $1\%$ & 3.35e-10 &  2.87e-10 &  1.43e-1 &  1.43e-1 &  1.43e-1\\
                   & $5\%$ & 3.11e-10 &  8.24e-12 &  1.48e-1 &  1.48e-1 &  1.48e-1\\
  \hline
  \texttt{deriv2}  & $1\%$ & 1.36e-1  &  4.51e-4  &  1.79e-2 &  1.48e-1 &  1.78e-2\\
                   & $5\%$ & 1.57e-1  &  3.85e-4  &  2.40e-2 &  1.77e-1 &  2.40e-2\\
  \hline
  \texttt{foxgood} & $1\%$ & 4.84e-2  &  2.26e-8  &  9.98e-1 &  1.02e0 &  9.98e-1\\
                   & $5\%$ & 1.90e-2  &  1.51e-9  &  2.27e0 &  2.28e0 &  2.27e0\\
  \hline
  \texttt{gravity} & $1\%$ & 3.92e-2  &  2.33e-5  &  1.39e0 &  1.41e0 &  1.39e0\\
                   & $5\%$ & 1.96e-2  &  9.47e-6  &  3.10e0 &  3.10e0 &  3.10e0\\
  \hline
  \texttt{heat}    & $1\%$ & 5.54e-1  &  8.74e-1  &  8.95e-1 &  1.06e0 &  1.32e0\\
                   & $5\%$ & 8.90e-1  &  1.01e0  &  1.87e0 &  1.76e0 &  1.99e0\\
  \hline
  \texttt{phillips}& $1\%$ & 3.25e-3  &  3.98e-4  &  6.14e-2 &  6.06e-2 &  6.14e-2\\
                   & $5\%$ & 5.64e-3  &  5.82e-4  &  8.37e-2 &  8.18e-2 &  8.34e-2\\
  \hline
  \texttt{shaw}    & $1\%$ & 3.79e-4  &  3.70e-8  &  3.32e0 &  3.32e0 &  3.32e0\\
                   & $5\%$ & 9.73e-4  &  2.17e-8  &  9.23e0 &  9.23e0 &  9.23e0\\
  \hline
  \end{tabular}
\end{table}

Last, we present some results on the computing time for \texttt{deriv2} versus the
problem dimension, and at two truncation levels for RSVD, i.e., $k=20$ and $k=30$. The numerical results are given in Fig.
\ref{fig:cpu}. The cubic scaling of the standard approach and quadratic scaling
of the approach based on RSVD are clearly observed,  confirming the complexity
analysis in Sections \ref{sec:prelim} and \ref{sec:reg}. In both \eqref{eqn:t-tikh_x} and
\eqref{eqn:t-tikh_x-gen2}, computing RSVD represents the dominant part of the overall computational
efforts, and thus the increase of the rank $k$ from 20 to 30 adds very little overheads
(compare the dashed and solid curves in Fig. \ref{fig:cpu}). Further, for Tikhonov regularization, the
two randomized variants are equally efficient, and for the general one, the
proposed approach is slightly more expensive due to its direct use of
$L$ in constructing the approximation $\tilde B_k$ to $B:=AL^\#$. Although
not presented, we note that the results for other examples are very similar.

\begin{figure}[hbt!]
  \centering
  \begin{tabular}{cc}
    \includegraphics[width=0.5\textwidth]{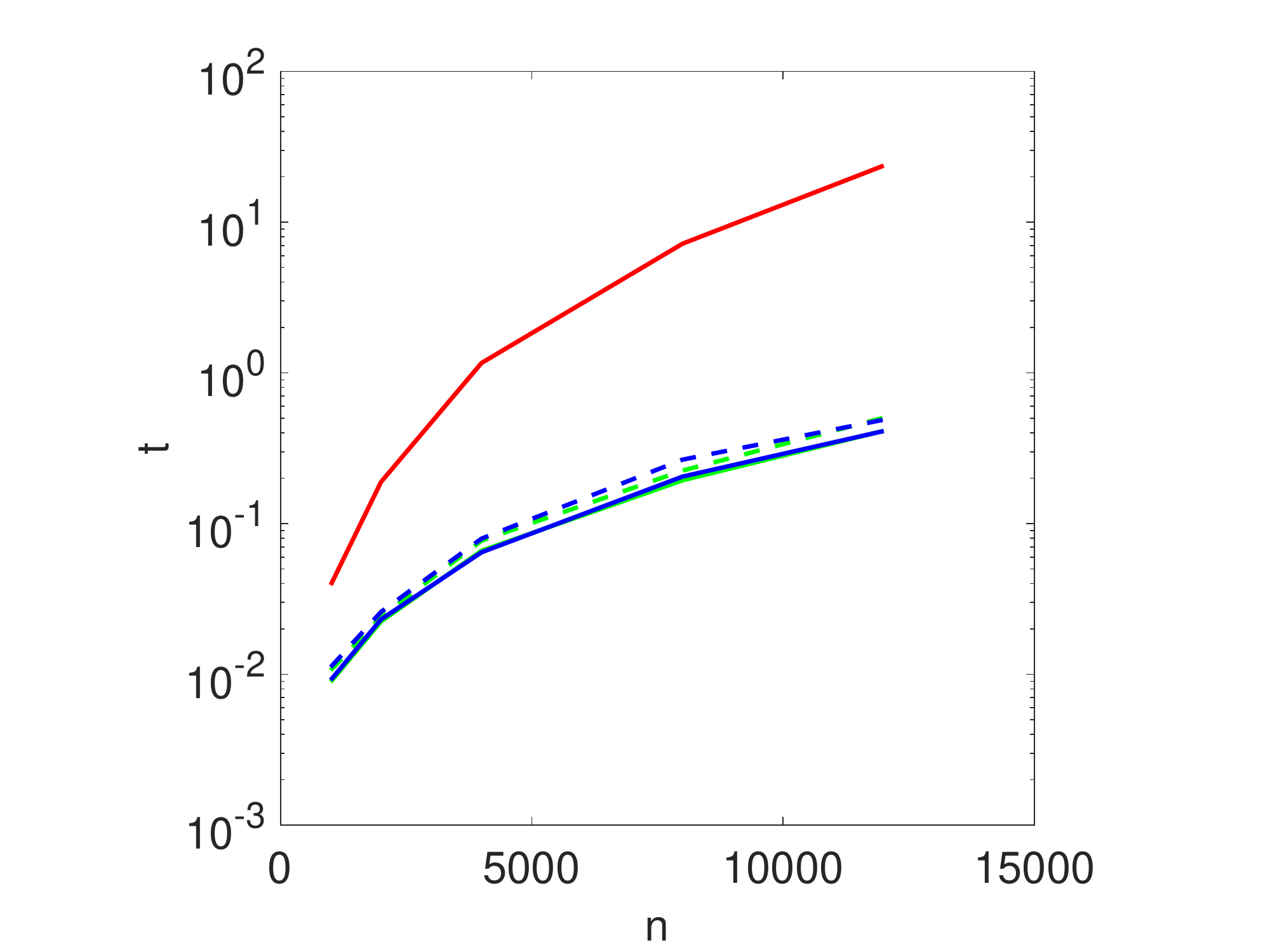} & \includegraphics[width=0.5\textwidth]{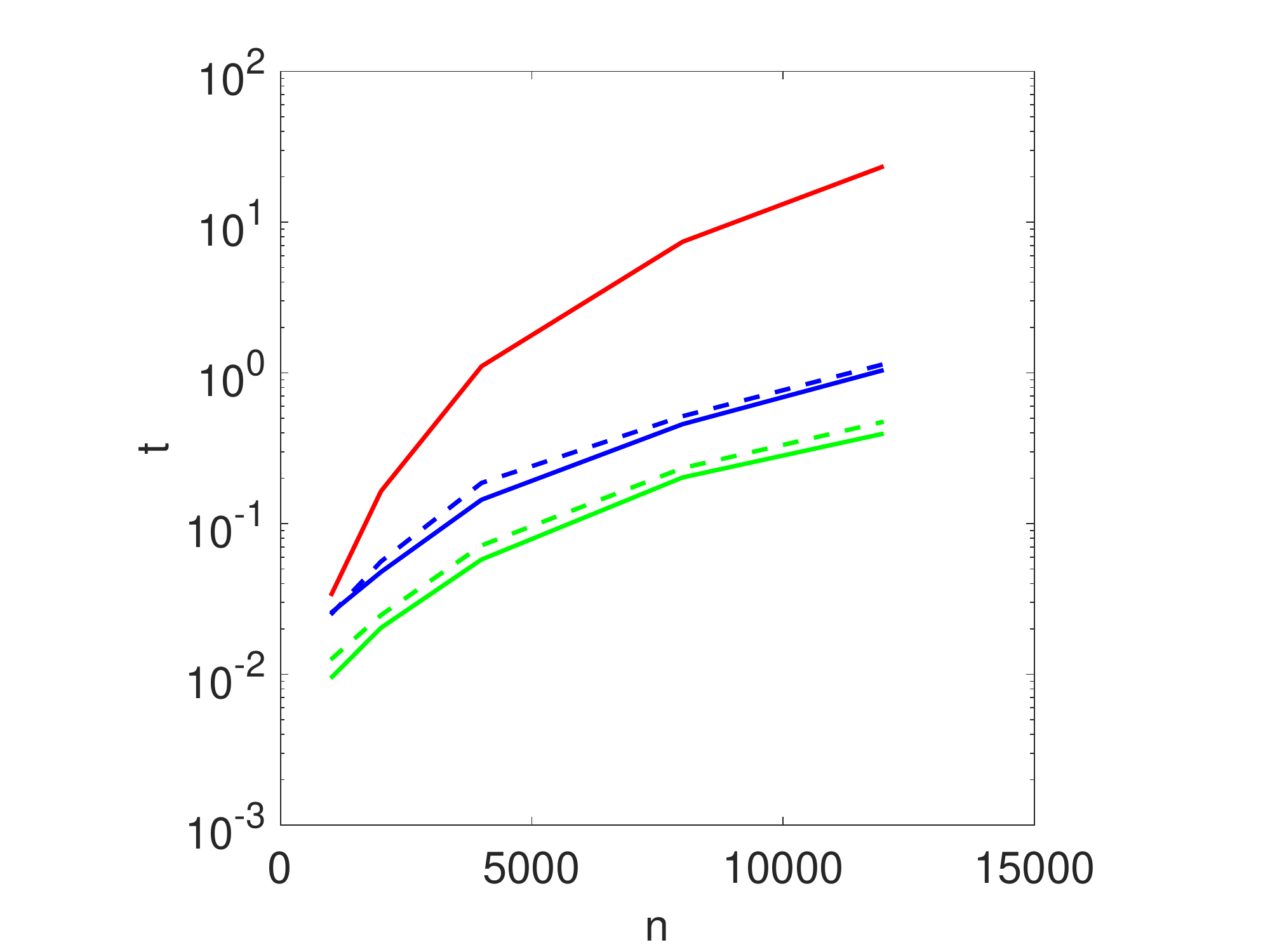}\\
    (a) standard Tikhonov & (b) general Tikhonov
  \end{tabular}
  \caption{The computing time $t$ (in seconds) for the example \texttt{deriv2} at different dimension $n$ ($m=n$).
  The red, green and blue curves refer to Tikhonov regularization, existing approach \cite{XiangZou:2013,XiangZou:2015}
  and the new approach, respectively, and the sold and dashed curves denote $k=20$ and $k=30$, respectively.\label{fig:cpu}}
\end{figure}

\subsection{Convergence of the algorithm}\label{ssec:conv}
There are several factors influencing the quality of $\tilde x_\alpha$
the regularization parameter $\alpha$, the noise level $\delta$ and the rank $k$ of the RSVD approximation.
The optimal truncation level $k$ should depend on both $\alpha$ and $\delta$. This part
presents a study with \texttt{deriv2} and \texttt{shaw}, which
are mildly and severely ill-posed, respectively.

\begin{figure}[hbt!]
  \centering
  \begin{tabular}{cc}
  \includegraphics[width=0.5\textwidth]{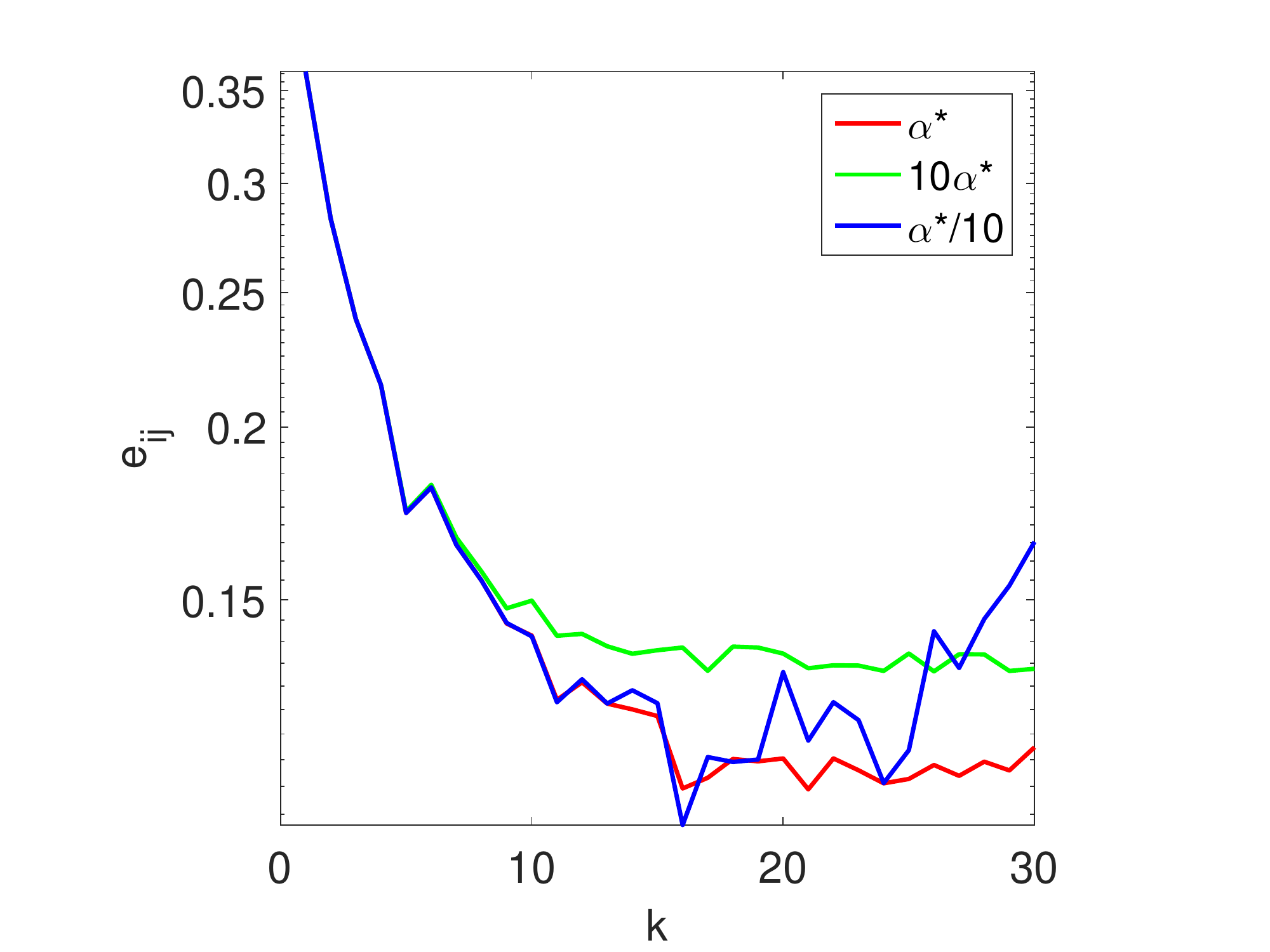} & \includegraphics[width=0.5\textwidth]{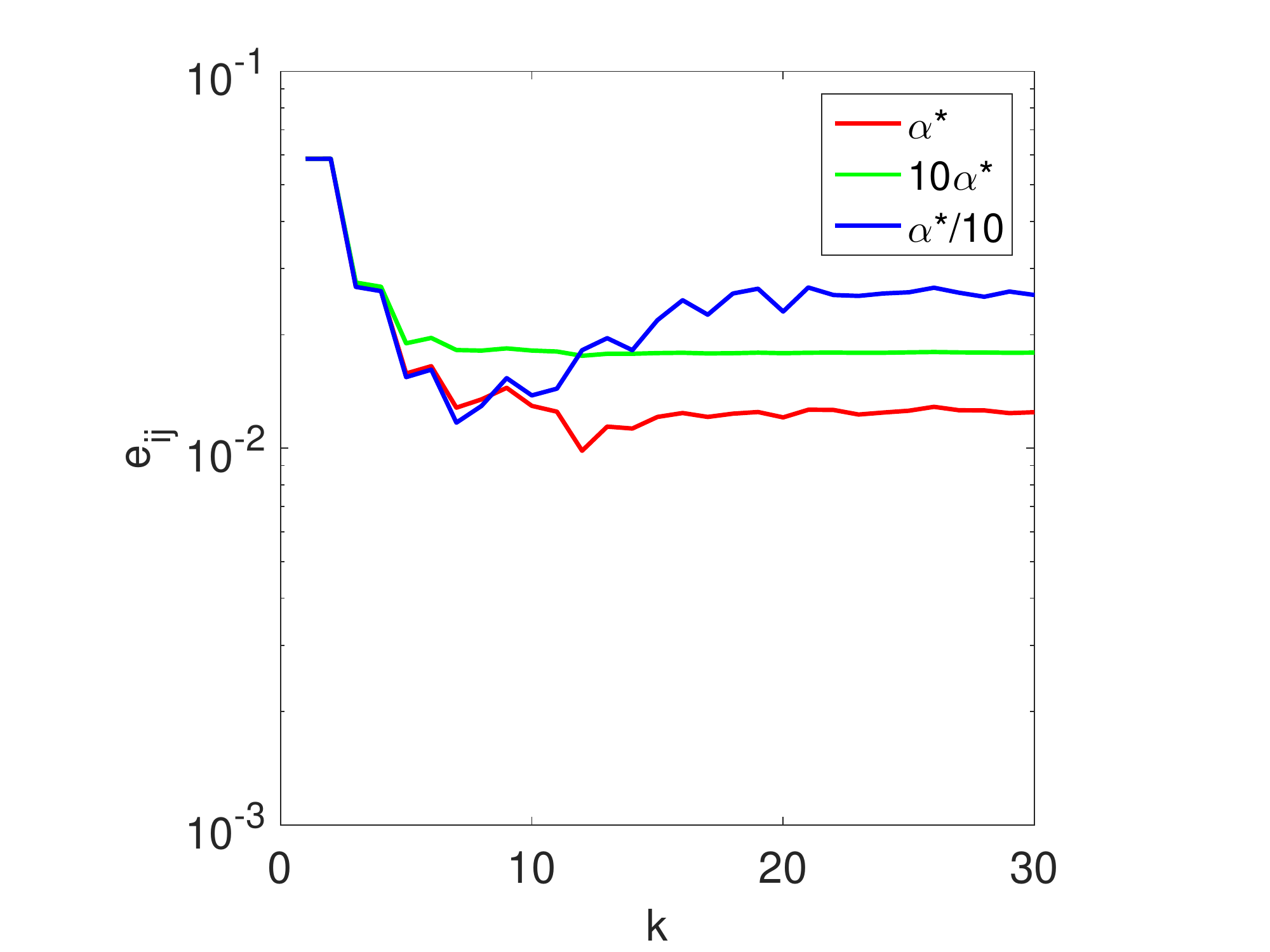}\\
  \includegraphics[width=0.5\textwidth]{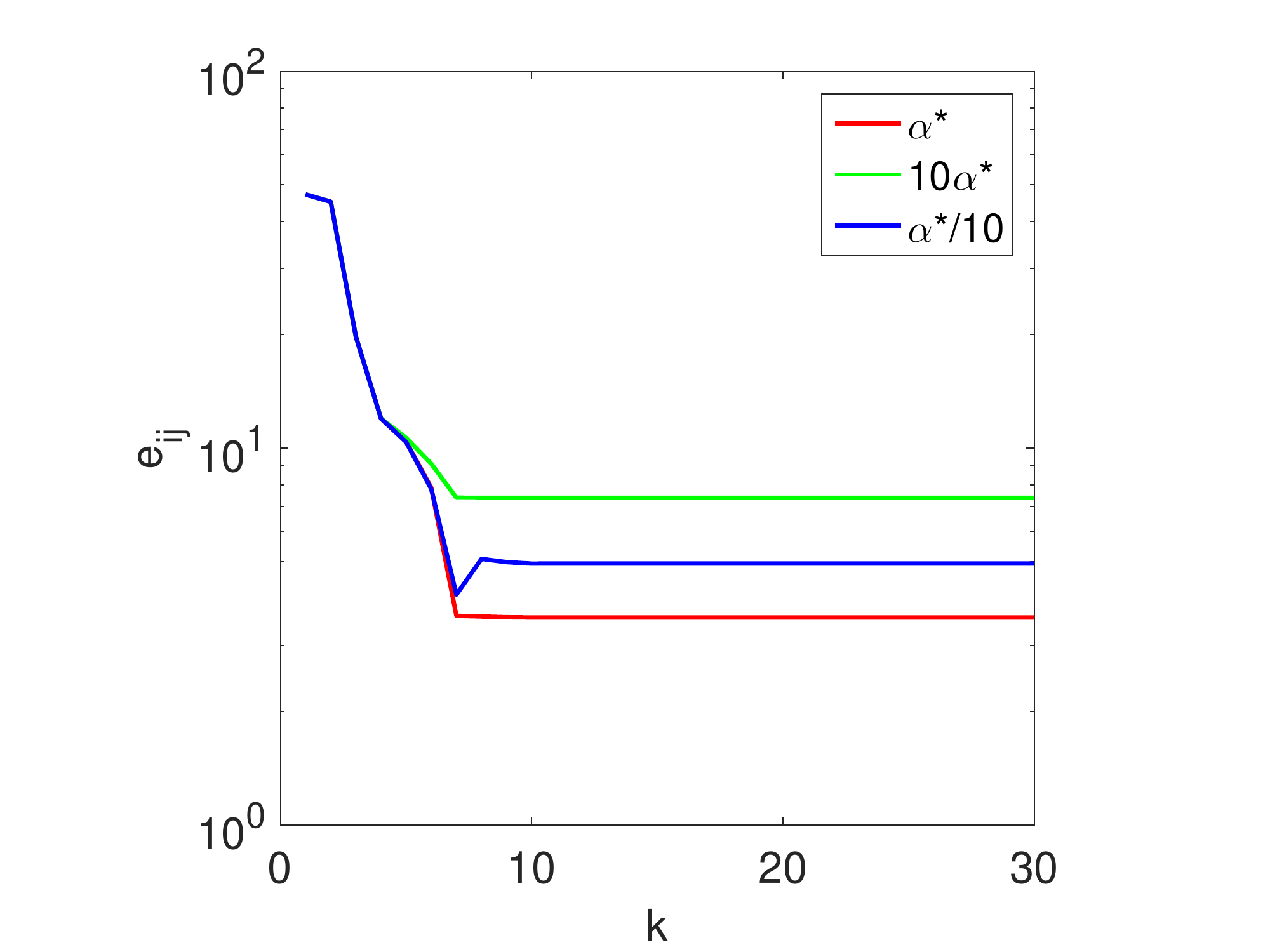} & \includegraphics[width=0.5\textwidth]{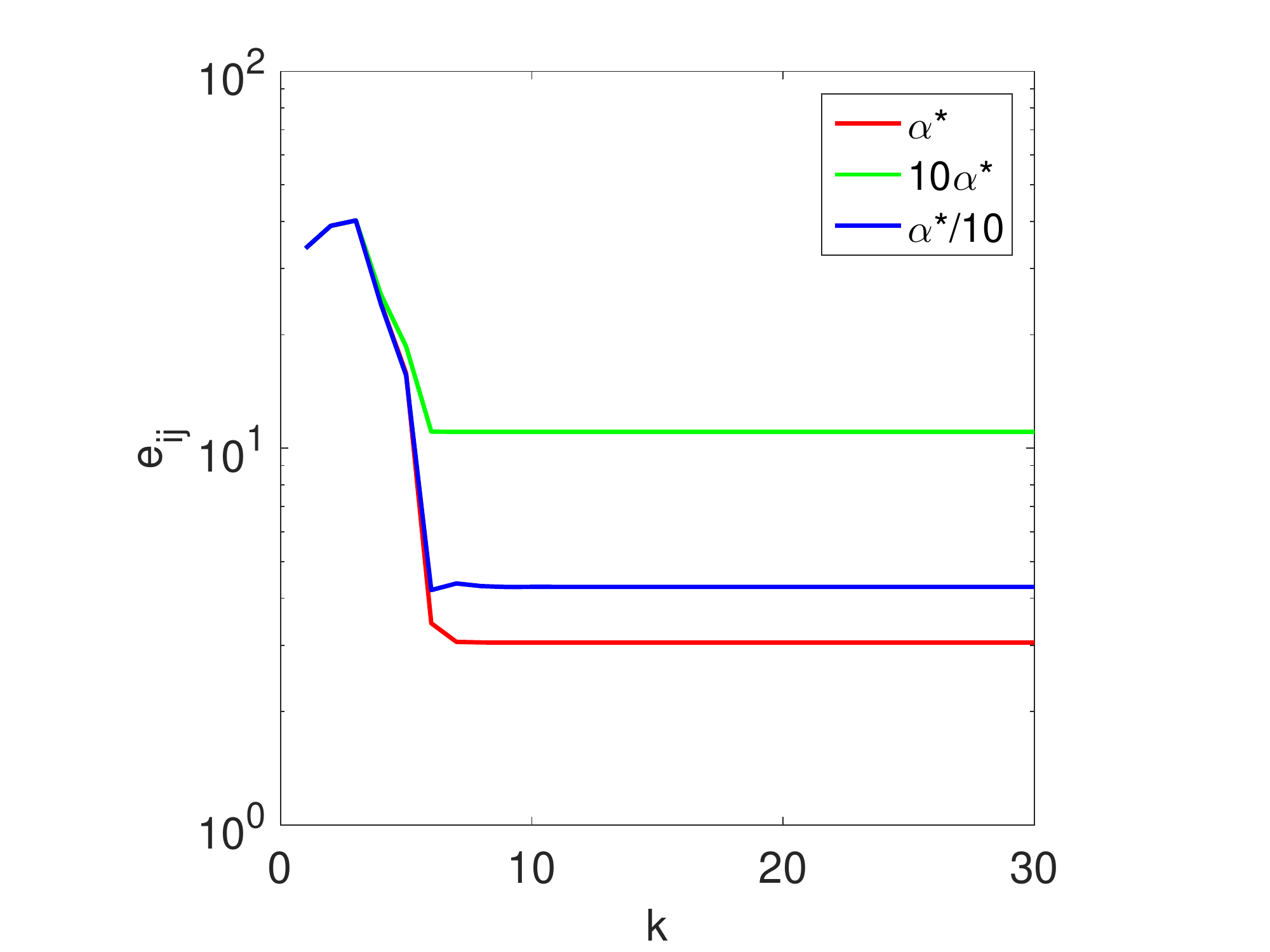}\\
  (a) standard Tikhonov & (b) general Tikhonov
  \end{tabular}
  \caption{The convergence of the error $e_{ij}$ with respect to the rank $k$ for \texttt{deriv2} (top) and \texttt{shaw} (bottom) with $\delta=1\%$ and different regularization parameters.\label{fig:convk}}
\end{figure}

First, we examine the influence of $\alpha$ on the optimal $k$. The numerical results for three different
levels of regularization are given in Fig. \ref{fig:convk}. In the figure, the notation $\alpha^*$ refers
to the value attaining the the smallest error for Tikhonov solution $x_\alpha$, and thus $10\alpha^*$ and
$\alpha^*/10$ represent respectively over- and under-regularization. The optimal
$k$ value decreases with the increase of $\alpha$ when $\alpha \gg\alpha^*$. This may be explained by the
fact that a too large $\alpha$ causes large approximation error and thus can tolerate large errors
in the approximation $\tilde A_k$ (for a small $k$). The dependence can be sensitive
for mildly ill-posed problems, and also on the penalty. The penalty influences the singular value
spectra in the RSVD approximation implicitly by preconditioning: since $L$ is a discrete differential operator,
the (weighted) pseudoinverse $L^\#$ (or $L^\dag$) is a smoothing operator, and thus the singular values of
$B=AL^\#$ decay faster than that of $A$. In all cases, the error $e_{ij}$ is
nearly monotonically decreasing in $k$ (and finally levels off at $e$, as expected). In the under-regularized
regime (i.e., $\alpha\ll\alpha^*$), the behavior is slightly different: the error $e_{ij}$ first decreases, and then increases before eventually
leveling off at $e$. This is attributed to the fact that proper low-rank truncation of $A$ induces extra
regularization, in a manner similar to TSVD in Section \ref{ssec:tsvd}. Thus,
an approximation that is only close to $x_\alpha$ (see e.g., \cite{BoutsidisMagdon:2014,XiangZou:2013,
XiangZou:2015,WeiXieZhang:2016}) is not necessarily close to $x^\dag$, when $\alpha$ is not chosen properly.

\begin{figure}[hbt!]
  \centering
  \begin{tabular}{cc}
  \includegraphics[width=0.5\textwidth]{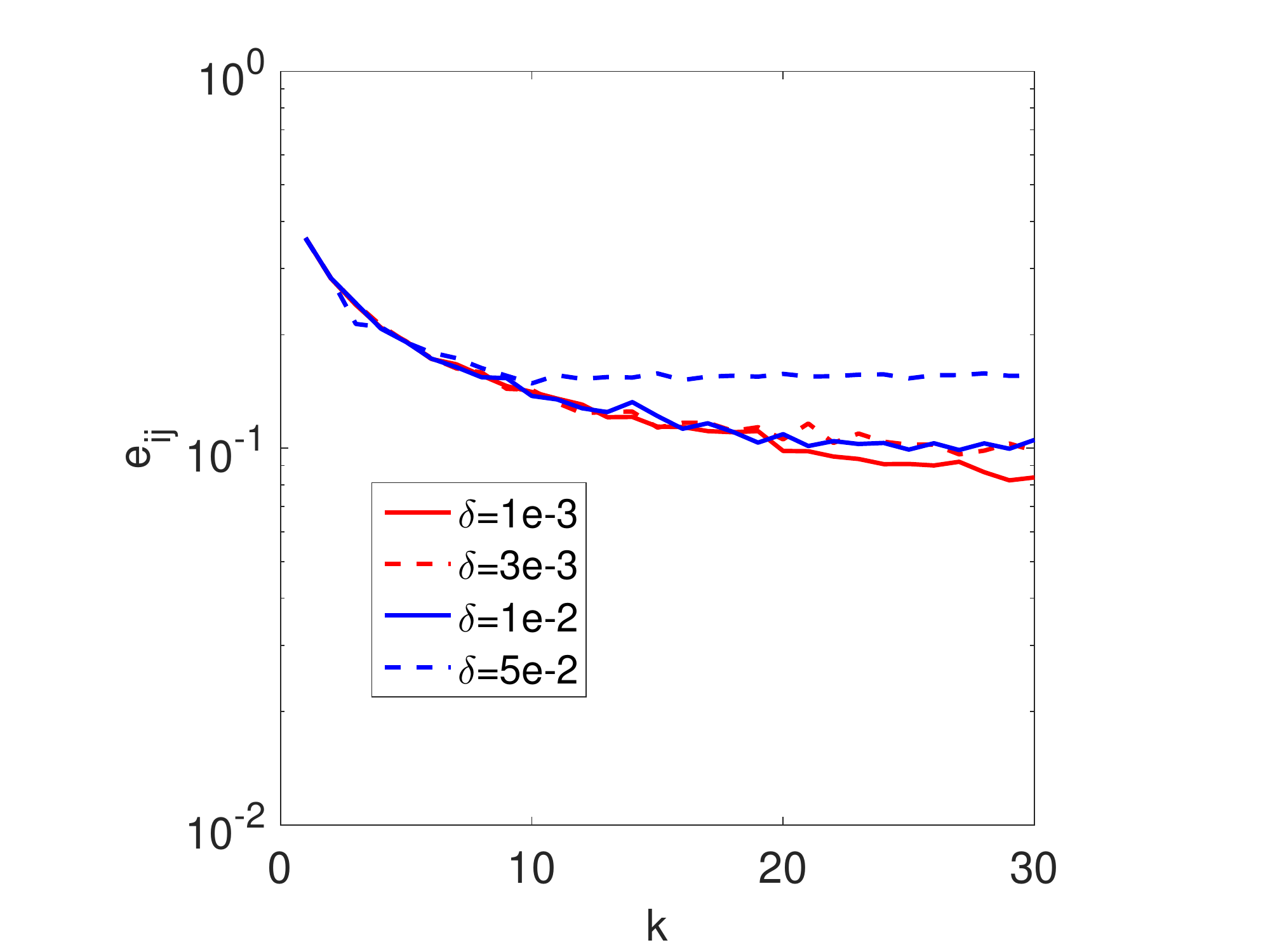} & \includegraphics[width=0.5\textwidth]{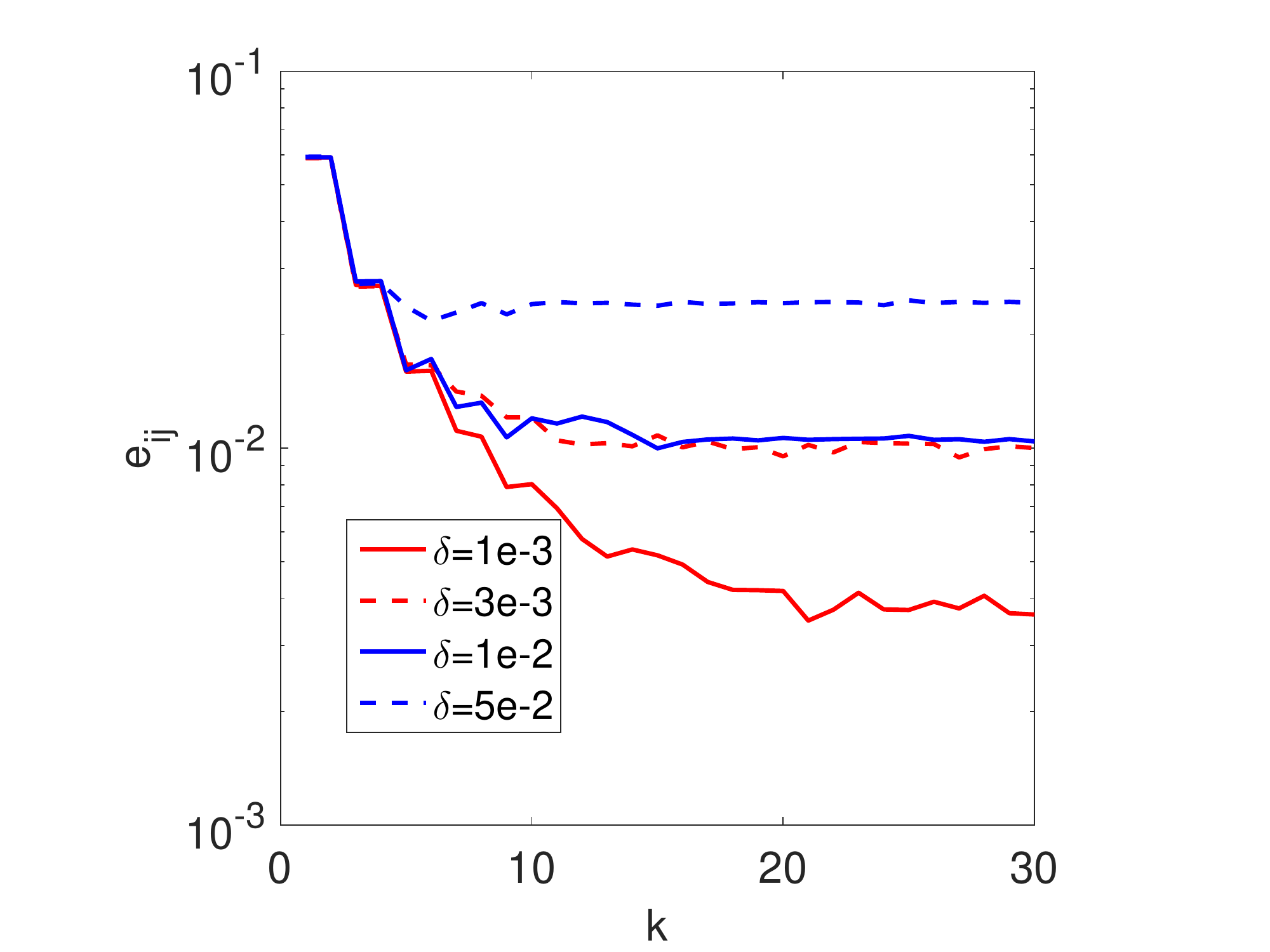}\\
  \includegraphics[width=0.5\textwidth]{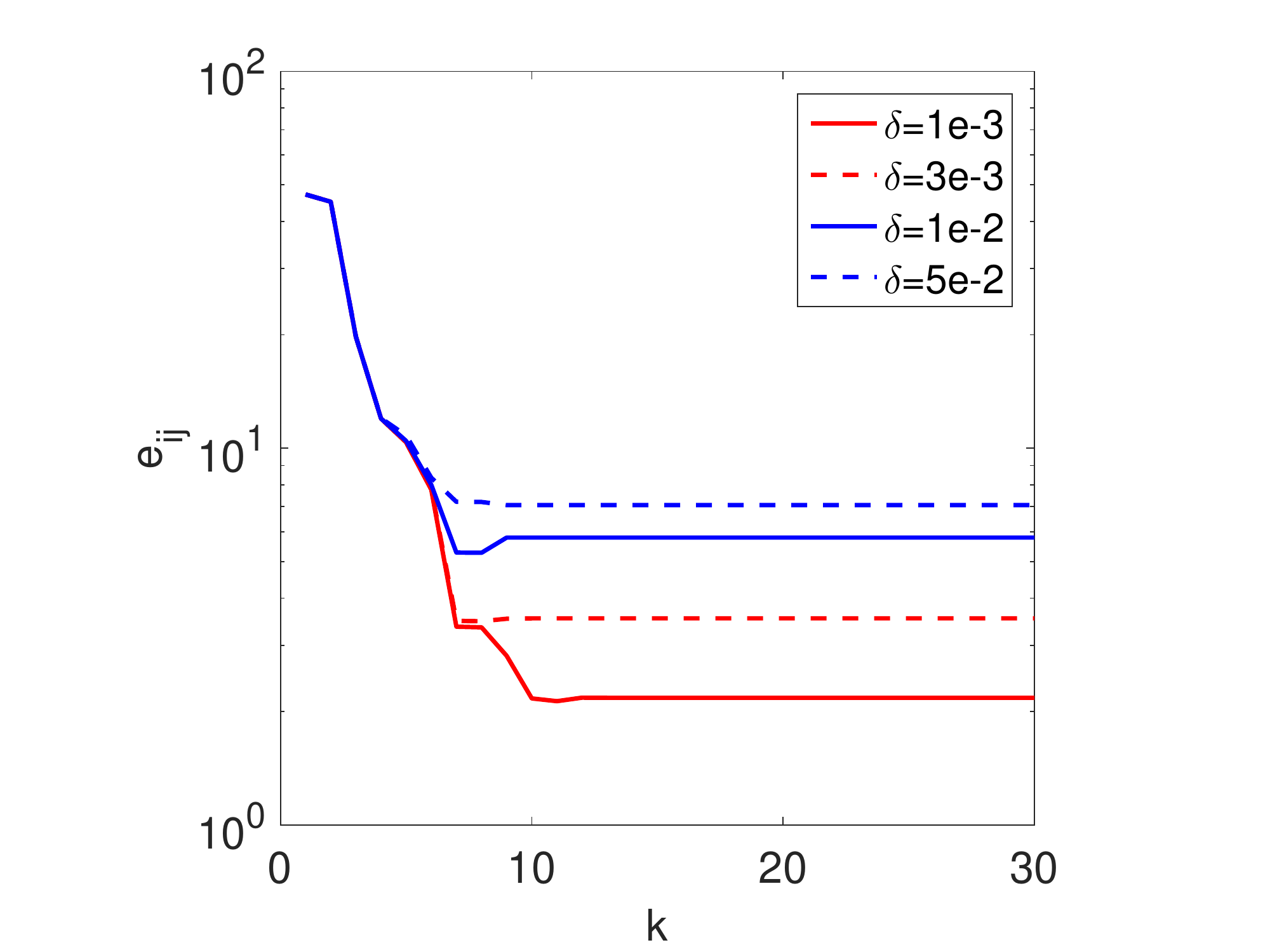} & \includegraphics[width=0.5\textwidth]{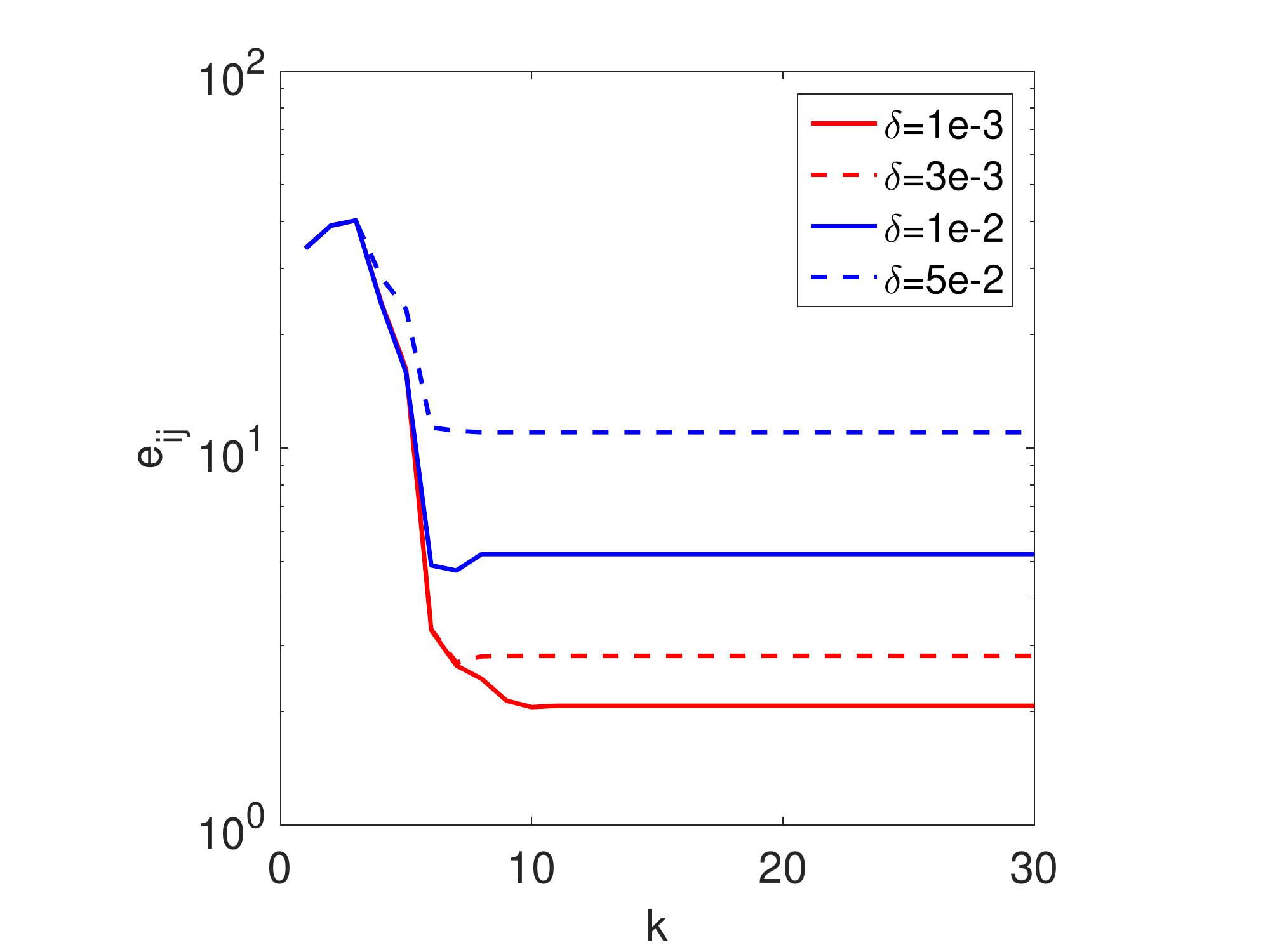}\\
  (a) standard Tikhonov & (b) general Tikhonov
  \end{tabular}
  \caption{The convergence of the error $e_{ij}$ with respect to the rank $k$ for \texttt{deriv2}
  (top) and \texttt{shaw} (bottom) at different noise levels. \label{fig:convk-nl}}
\end{figure}

Next we examine the influence of the noise level $\delta$; see Fig. \ref{fig:convk-nl}. With the optimal choice of $\alpha$, the
optimal $k$ increases as $\delta$ decreases, which is especially pronounced for mildly
ill-posed problems. Thus, RSVD is especially efficient for the following two cases: (a) highly
noisy data (b) severely ill-posed problem. These observations agree well with
Theorem \ref{thm:err-Tikh}: a low-rank approximation $\tilde A_k$ whose accuracy is commensurate with
$\delta$ is sufficient, and in either
case, a small rank is sufficient for obtaining an acceptable approximation. For a fixed
$k$, the error $e_{ij}$ almost increases monotonically with the noise level $\delta$.

These empirical observations naturally motivate developing an adaptive strategy for choosing the
rank $k$ on the fly so as to effect the optimal complexity. This requires a careful analysis of the
balance between $k$, $\delta$, $\alpha$, and suitable \textit{a posteriori} estimators. We leave this interesting topic to a future work.

\subsection{Electrical impedance tomography}\label{ssec:eit}
Last,  we illustrate the approach on 2D electrical impedance tomography (EIT), a diffusive
imaging modality of recovering the electrical conductivity from boundary voltage measurement. This is one
canonical nonlinear inverse problem. We consider the problem on a unit circle with sixteen electrodes
uniformly placed on the boundary, and adopt the complete electrode model \cite{SomersaloCheney:1992} as
the forward model. It is discretized by the standard Galerkin FEM with conforming piecewise linear basis
functions, on a quasi-uniform finite element mesh with $2129$ nodes. For the inversion step, we employ ten
sinusoidal input currents, unit contact impedance and measure the voltage data (corrupted
by $\delta=0.1\%$ noise). The reconstructions are obtained with an $H^1(\Omega)$-seminorm penalty.
We refer to \cite{GehreJinLu:2014,JinXuZou:2017} for details on numerical implementation.
We test the RSVD algorithm with the linearized model.
It can be implemented efficiently without explicitly computing the linearized map.
More precisely, let $F$ be the (nonlinear) forward operator, and $\sigma_0$ be the background (fixed at $1$). Then the random
probing of the range $\mathcal{R}(F'(\sigma_0))$ of the linearized forward operator $F'(\sigma_0)$ (cf. Step 4
of Algorithm \ref{alg:rsvd}) can be approximated by
\begin{equation*}
  F'(\sigma_0)\omega_i \approx F(\sigma_0+\omega_i) - F(\sigma_0), \quad i=1,\ldots k+p,
\end{equation*}
and it can be made very accurate by choosing a small variance for the random vector $\omega_i$.
Step 6 of Algorithm \ref{alg:rsvd} can be done efficiently via the adjoint technique.

The numerical results are presented in Fig. \ref{fig:eit}, where linearization refers to the reconstruction
by linearizing the nonlinear forward model at the background $\sigma_0$. This is one of the most classical reconstruction methods in
EIT imaging. The rank $k$ is taken to be $k=30$ for $\tilde x_\alpha$, which is
sufficient given the severe ill-posed nature of the EIT inverse problem. Visually, the RSVD reconstruction
is indistinguishable from the conventional approach. Note that contrast loss is often observed for
EIT reconstructions obtained by a smoothness penalty. The computing time (in seconds) for RSVD is less
than 8, whereas that for the conventional method is about 60. Hence,  RSVD can greatly accelerate
EIT imaging.

\begin{figure}
  \centering
  \begin{tabular}{ccc}
   \includegraphics[width=0.33\textwidth]{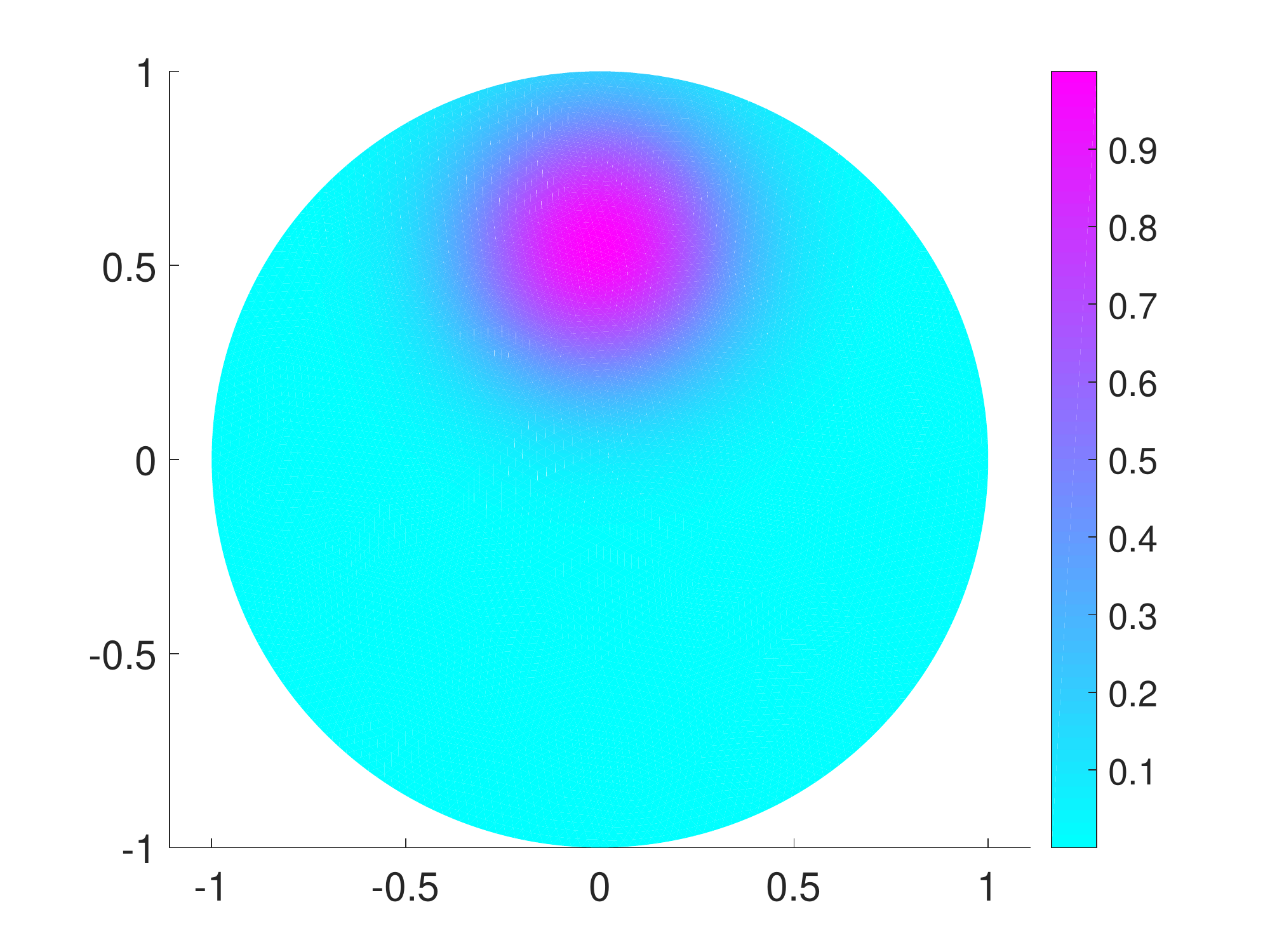} & \includegraphics[width=0.33\textwidth]{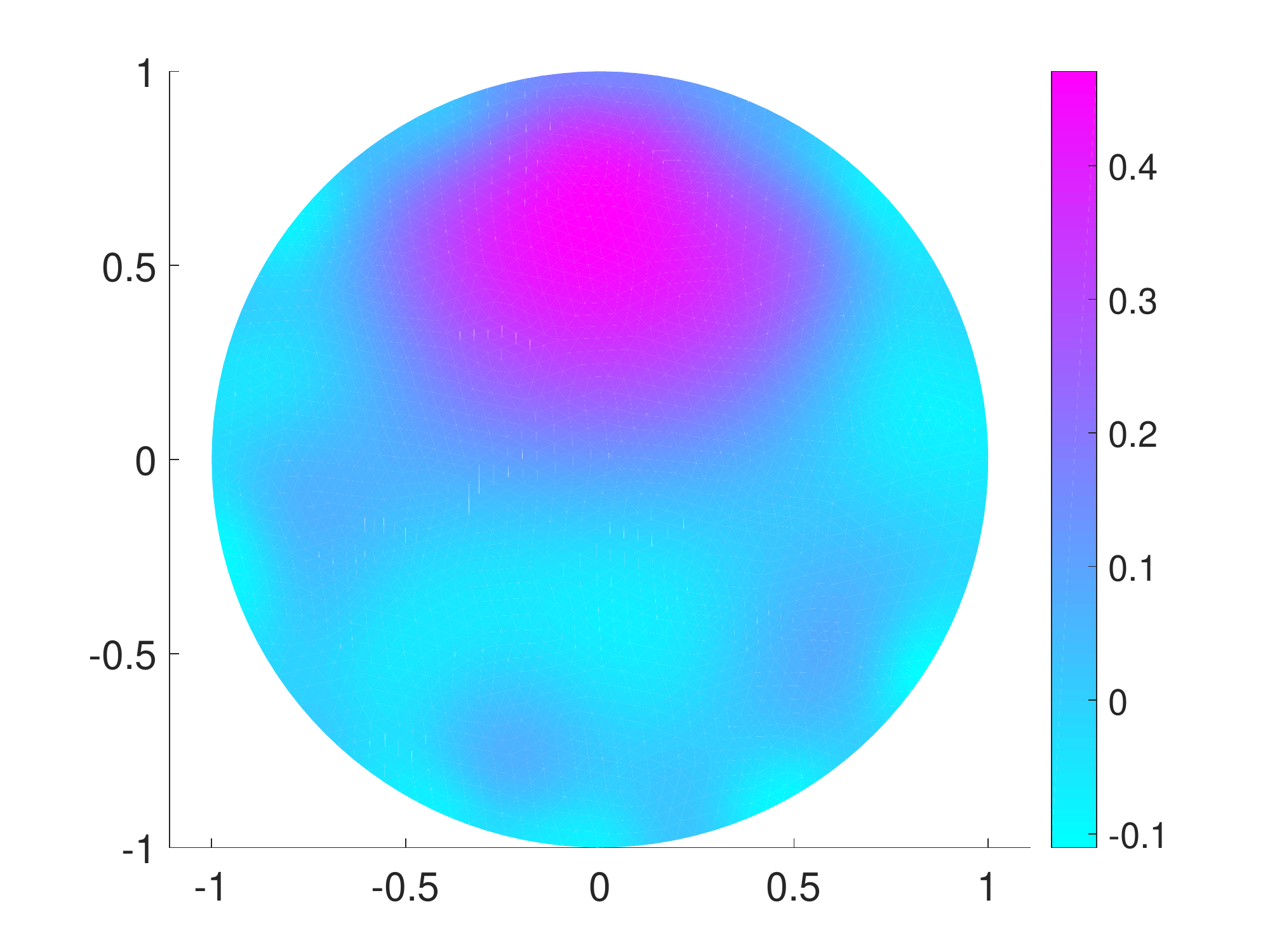} & \includegraphics[width=0.33\textwidth]{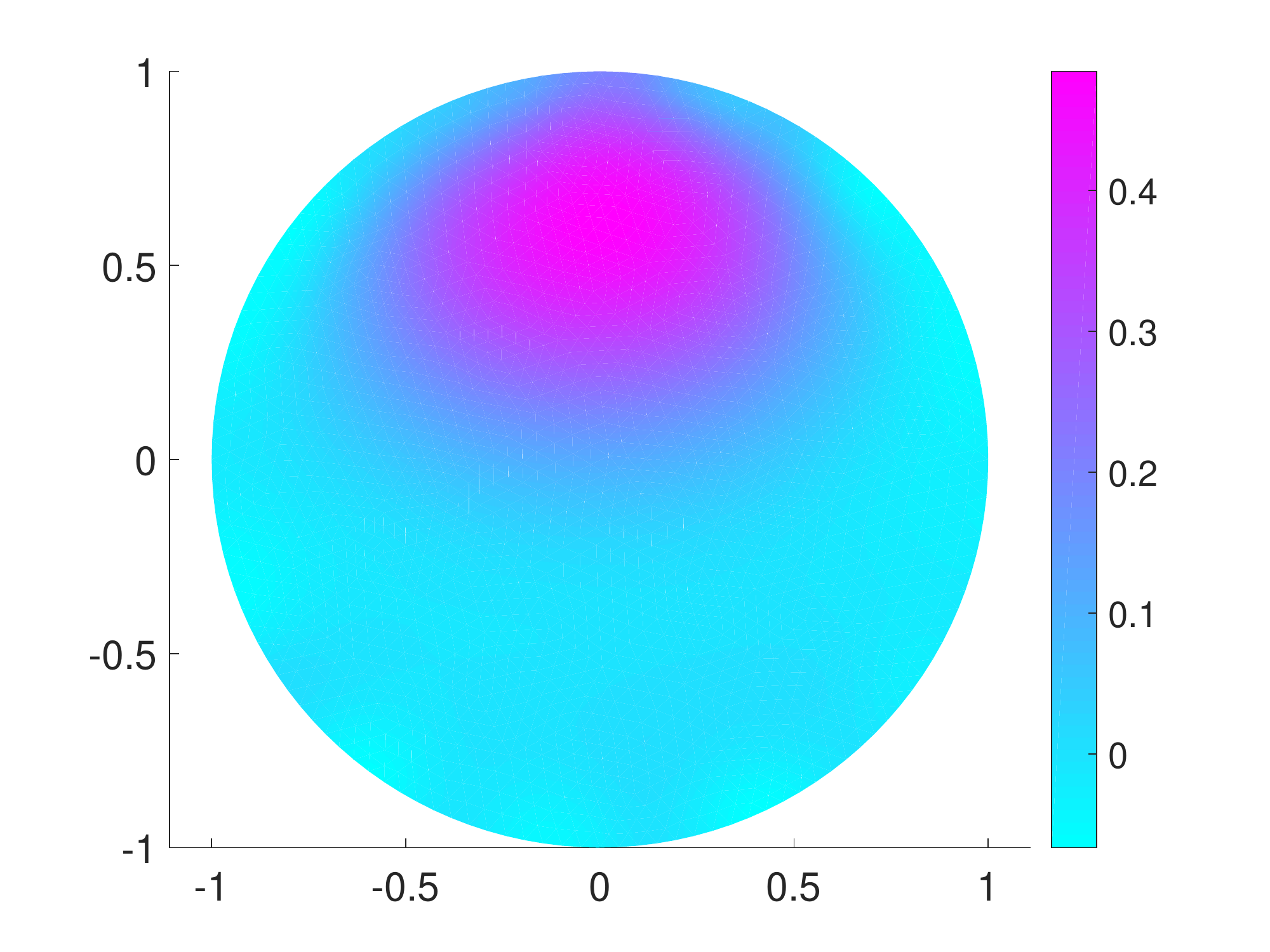}\\
   exact & linearization & RSVD
  \end{tabular}
  \caption{Numerical reconstructions for EIT with $0.1\%$ noise.\label{fig:eit}}
\end{figure}

\section{Conclusion}

In this work, we have provided a unified framework for developing efficient linear inversion techniques
via RSVD and classical regularization methods, building on a certain
range condition on the regularized solution. The construction is illustrated on three popular linear
inversion methods for finding smooth solutions, i.e., truncated singular value decomposition, Tikhonov regularization and general
Tikhonov regularization with a smoothness penalty. We have provided a novel interpretation of the approach
via convex duality, i.e., it first approximates the dual variable via randomized SVD and then recovers
the primal variable via duality relation. Further, we gave rigorous error bounds on the approximation
under the canonical sourcewise representation, which provide useful guidelines for constructing a low-rank
approximation. We have presented extensive numerical experiments, including nonlinear tomography, to illustrate the efficiency and accuracy
of the approach, and demonstrated its competitiveness with existing methods.


\begin{algorithm}[hbt!]
  \centering
  \caption{Iterative refinement of RSVD-Tikhonov solution. \label{alg:iter-refine}}
  \begin{algorithmic}[1]
    \STATE Give $A$, $b$ and $J$, and initialize $(x^0,p^0)=(0,0)$.
    \STATE Compute RSVD $(\tilde U_k,\tilde \Sigma_k,\tilde V_k)$
    to $AL^\dag$ by Algorithm \ref{alg:rsvd}.
    \FOR {$j=1,\ldots,J$}
      \STATE Compute the auxiliary variable $z^{j}$ by \eqref{eqn:it-z}.
      \STATE Update the dual variable $p^{j+1}$ by \eqref{eqn:it-p}.
      \STATE Update the primal variable $x^{j+1}$ by \eqref{eqn:it-x}.
      \STATE Check the stopping criterion.
    \ENDFOR
    \STATE Output $x^J$ as an approximation to $x_\alpha$.
  \end{algorithmic}
\end{algorithm}

\appendix

\section*{Appendix A: Iterative refinement}\label{ssec:duality}

Proposition \ref{prop:dual} enables iteratively refining the inverse solution when RSVD is not sufficiently accurate.
This idea was proposed in \cite{WangLeeMahdaviKolar:2017,ZhangMahdaviJinYang:2014} for standard Tikhonov regularization,
and we describe the procedure in a slightly more general context. Suppose $\mathcal{N}(L)=\{0\}$. Given a current
iterate $x^j$, we define a functional $J_\alpha^j(\delta x)$ for the increment $\delta x$ by
\begin{equation*}
  J_\alpha^j(\delta x) : = \|A(\delta x+x^j)-b\|^2 + \alpha \|L(\delta x+x^j)\|^2.
\end{equation*}
Thus the optimal correction $\delta x_\alpha$ satisfies
\begin{equation*}
  (A^*A + \alpha L^*L)\delta x_\alpha = A^*(b-Ax^j)-\alpha L^*Lx^j,
\end{equation*}
i.e.,
\begin{equation}\label{eqn:opt-cor}
  (B^* B + \alpha I) L\delta x_\alpha = B^* (b-Ax^j)-\alpha Lx^j,
\end{equation}
with $B=AL^\dag$.
However, its direct solution is expensive. We employ RSVD for a low-dimensional
space $\tilde V_k$ (corresponding to $B$), parameterize the increment $L\delta x$ by $L\delta x=\tilde
V_k^*z$ and update $z$ only. That is, we minimize the following functional
in $z$
\begin{equation*}
  J_\alpha^j(z) : = \|A(L^{\dag}\tilde V_k^*z+x^j)-b\|^2 + \alpha \|z+\tilde V_k L x^j\|^2.
\end{equation*}
Since $k\ll m$, the problem can be solved efficiently. More precisely, given the current estimate $x^j$, the optimal $z$ solves
\begin{equation}\label{eqn:it-z}
   (\tilde V_k B^*  B\tilde V_k^* + \alpha I) z = \tilde V_k B^*(b-A x^j)-\alpha \tilde V_kLx^j.
\end{equation}
It is the Galerkin projection of \eqref{eqn:opt-cor} for $\delta x_\alpha$
onto the subspace $\tilde V_k$. Then we update the dual $\xi$ and the primal
$x$ by the duality relation in Section \ref{ssec:duality}:
\begin{align}
  \xi^{j+1} & = b-Ax^j - B\tilde V_k^* z^j,\label{eqn:it-p}\\
  x^{j+1} & = \alpha^{-1}\Gamma A^*\xi^{j+1}.\label{eqn:it-x}
\end{align}
Summarizing the steps gives Algorithm \ref{alg:iter-refine}. Note that the duality relation
\eqref{eqn:Fenchel} enables $A$ and $A^*$ to enter into the play, thereby allowing progressively improving the accuracy.
The main extra cost lies in matrix-vector products by $A$ and $A^*$.

The iterative refinement is a linear fixed-point iteration, with
the solution $x_\alpha$ being a fixed point and the iteration matrix being independent of the
iterate. Hence, if the first iteration is contractive, i.e., $\|x^1-x_\alpha\| \leq c\|x^0-x_\alpha\|$,
for some $c\in(0,1)$, then Algorithm \ref{alg:iter-refine} converges linearly to
$x_\alpha$. It can be satisfied if the RSVD
approximation $(\tilde U_k,\tilde \Sigma_k,\tilde V_k)$ is reasonably accurate to $B$.

\bibliographystyle{siam}
\bibliography{bib_rsvd}
\end{document}